\documentclass{amsart}

\usepackage[colorlinks=true, urlcolor=blue, citecolor=blue, linkcolor=blue, hypertexnames=false, hyperfootnotes=true]{hyperref}
\usepackage{amssymb}
\usepackage{mathrsfs}
\usepackage{dsfont}
\usepackage{mathtools}
\usepackage{tikz}

\newcommand\barrow{\textstyle\mathop{\rightarrow}_{}^{\hspace{-8pt}\bullet}}
\newcommand\arrowb{\textstyle\mathop{\rightarrow}_{\hspace{-8pt}\bullet}^{}}

\newcommand\carrow{\textstyle\mathop{\rightarrow}_{}^{\hspace{-8pt}\circ}}
\newcommand\arrowc{\textstyle\mathop{\rightarrow}_{\hspace{-8pt}\circ}^{}}
\newcommand\carrowc{\textstyle\mathop{\rightarrow}_{\hspace{-8pt}\circ}^{\hspace{-8pt}\circ}}
\newcommand\carrowb{\textstyle\mathop{\rightarrow}_{\hspace{-8pt}\bullet}^{\hspace{-8pt}\circ}}

\newtheorem{thm}{Theorem}

\newtheorem{prp}{Proposition}

\theoremstyle{definition}

\newtheorem{dfn}{Definition}

\numberwithin{equation}{section}

\author{Tristan Bice}
\address{Federal University of Bahia\\
Salvador\\
Brazil}
\email{Tristan.Bice@gmail.com}
\thanks{This research has been supported by an IMPA (Brazil) postdoctoral fellowship.}
\keywords{distances, hemimetrics, quasimetrics, order, topology, completeness, semicontinuity, ordered normed spaces}
\subjclass[2010]{46B40, 54E50, 54E55}


\title{Semicontinuity in Ordered Banach Spaces}

\begin{document}

\begin{abstract}
We extend the C*-algebra semicontinuity theory of Akemann, Brown and Pedersen to (pre)ordered Banach spaces.
\end{abstract}

\maketitle

\section*{Motivation}

To understand the kind of results we wish to generalize, let us first recall some basic facts about semicontinuity.  By definition, a function $f$ from a topological space $Q$ to $\mathbb{R}$ is \emph{lower semicontinuous} (lsc) if, for all $q\in Q$ and all nets $(q_\lambda)\subseteq Q$,
\[q_\lambda\rightarrow q\quad\Rightarrow\quad f(q)\leq\lim\inf f(q_\lambda).\]
In more topological terms
\[f\text{ is lsc}\quad\Leftrightarrow\quad f\text{ is Scott continuous},\]
 where the the Scott topology of $\mathbb{R}$ consists of open sets of the form $(r,\infty)$, for all $r\in\mathbb{R}$.  For compact Hausdorff $Q$ we have, in more order theoretic terms,
\[f\text{ is lsc}\quad\Leftrightarrow\quad f_\lambda\rightarrow f\text{ pointwise, for an increasing net of continuous functions }(f_\lambda).\]
And by Dini's theorem, this convergence must be uniform iff $f$ is continuous, i.e.
\[f\text{ is continuous}\quad\Leftrightarrow\quad f\text{ is finite among lsc functions}.\]

In more general terms, what we have here is an ordered Banach space $X$ contained in a larger ordered Banach space $Y$ together with a set of positive functionals $Q$ on $Y$ considered in the weak topology induced by $X$, specifically
\begin{itemize}
\item $X=C(Q)=C(Q,\mathbb{R})=$ the continuous functions from $Q$ to $\mathbb{R}$.
\item $Y=B(Q)=B(Q,\mathbb{R})=$ the bounded functions from $Q$ to $\mathbb{R}$.
\item $Q\subseteq Y^*_+$, identifying points with their evaluation functionals.
\end{itemize}
For every $f\in Y$, we noted that $f|_Q$ is lsc iff $f|_Q$ is a pointwise limit of $f_\lambda|_Q$, for some increasing $(f_\lambda)\subseteq X$, where the convergence is necessarily uniform iff $f\in X$.

For a general ordered Banach space $X$ it is natural to take $Y=X^{**}$ and $Q=X^{*1}_+=$ the positive unit ball of $X^*$, which is compact Hausdorff in the weak* topology.  In this general situation, we want to know
\begin{enumerate}
\item Is $f\in X^{**}$ still lsc on $Q$ iff $f_\lambda\xrightarrow{\mathrm{w}^*}f$ for increasing $(f_\lambda)\subseteq X$?
\item Does $X$ still consist precisely of the finite lsc elements in $X^{**}$?
\end{enumerate}

The first question was investigated in detail for the self-adjoint part of a C*-algebra in \cite{AkemannPedersen1973} and \cite{Brown1988}, where a positive answer was given for both unital and separable C*-algebras.  The general case still appears to be open (see \cite{Brown2014}) although a positive answer was again obtained in \cite{AkemannPedersen1973} and \cite{Brown1988} by enlarging the set of weak* limits of increasing nets either to its norm closure or to limits of `almost' increasing nets.  Our first goal is to simplify and generalize these results to ordered Banach spaces using the non-symmetric distance theory from \cite{Bice2016b}.

This does not quite, however, generalize the original situation under consideration.  For if $X=C(Q)$ then $Q$ consists only of the non-zero extreme points of $X^{*1}_+$, and hence $B(Q)$ is only the `atomic part' of $X^{**}$.  But we can simultaneously generalize both these situations by replacing $\mathbb{R}$ with an ordered normed space $X$ and considering $C(Q,X)$ canonically embedded in $B(Q,X^{**})$.  This was also considered in \cite{Brown1988}, but only for the specific case $Q=\mathbb{N}\cup\{\infty\}$ and $X=\mathcal{K}(H)_\mathrm{sa}$.  Here again we will generalize to arbitrary ordered Banach spaces by considering an appropriate version of the Scott topology on lsc elements of $X^{**}$.

\section*{Outline}

In \autoref{D} we start with some general results for distances $\mathbf{d}$, i.e. functions merely satisfying the triangle inequality.  In particular, we generalize Dini's theorem in \autoref{Dini}, show that the $\mathbf{d}$-finite continuous functions are Yoneda complete in \autoref{Zcont} and characterize $\mathbf{d}$-algebraic distance spaces in \autoref{CauchyCauchy}.  We move on to preordered Banach spaces in \autoref{NS}, generalizing results from \cite{AkemannPedersen1973} and \cite{Brown1988} in \autoref{X**sc} and \autoref{CYX}.

\section{Distance Spaces}\label{D}

Even though our primary interest is in ordered normed spaces, it is more natural to do some preliminary work in more general non-symmetric distance spaces.

First define the composition $\mathbf{d}\circ\mathbf{e}$ of any $\mathbf{d},\mathbf{e}:X\times X\rightarrow[0,\infty]$ by
\begin{gather}
\mathbf{d}\circ\mathbf{e}(x,y)=\inf_{z\in X}\mathbf{d}(x,z)+\mathbf{e}(z,y).\nonumber\\
\textbf{From now on, we assume $\mathbf{d}$ a \emph{distance} on $X$ meaning}\nonumber\\
\mathbf{d}\leq\mathbf{d}\circ\mathbf{d}.\tag{$\triangle$}\label{tri}
\end{gather}
When then get a transitive relation $\leq^\mathbf{d}$ defined by
\[x\leq^\mathbf{d}y\quad\Leftrightarrow\quad\mathbf{d}(x,y)=0.\]
As in \cite{Goubault2013} Definition 6.1.1, we call $\mathbf{d}$ a \emph{hemimetric} if $\leq^\mathbf{d}$ is also reflexive, i.e. a preorder, and a \emph{quasimetric} if $\leq^\mathbf{d}$ is also antisymmetric, i.e. a partial order.

\subsection{Topology}

Just as with metrics, we can use $\mathbf{d}$ to define balls which generate a natural topology on $X$.  In normed spaces this corresponds to the usual norm topology, but we also need an analog of the weak* topology, and for this it turns out holes are more important.  We will also need an analog of the Scott topology, which is still generated by balls but only with centres in a specific subset of $X$.

So define the open upper/lower balls/holes with centre $c\in X$ and radius $\epsilon$ by
\begin{align*}
c^\bullet_\epsilon\ &=\ \{x\in X:\mathbf{d}(c,x)<\epsilon\}.\\
c_\bullet^\epsilon\ &=\ \{x\in X:\mathbf{d}(x,c)<\epsilon\}.\\
c^\circ_\epsilon\ &=\ \{x\in X:\mathbf{d}(x,c)>\epsilon\}.\\
c_\circ^\epsilon\ &=\ \{x\in X:\mathbf{d}(c,x)>\epsilon\}.
\end{align*}
For any $C\subseteq X$, let $C^\bullet$, $C_\bullet$, $C^\circ$, $C_\circ$, $C^\bullet_\bullet$, $C^\bullet_\circ$, $C^\circ_\bullet$ and $C^\circ_\circ$ denote the topologies on $X$ generated by the corresponding balls and holes with centres in $C$, i.e. by arbitrary unions of finite intersections.  
Denote convergence in these topologies by $\barrow$, $\arrowb$, $\carrow$, $\arrowc$, etc. so, for any net $(x_\lambda)\subseteq X$,
\begin{align*}
x_\lambda\barrow x\quad&\Leftrightarrow\quad\forall c\in C\ \limsup\mathbf{d}(c,x_\lambda)\leq\mathbf{d}(c,x).\\
x_\lambda\arrowb x\quad&\Leftrightarrow\quad\forall c\in C\ \limsup\mathbf{d}(x_\lambda,c)\leq\mathbf{d}(x,c).\\
x_\lambda\carrow x\quad&\Leftrightarrow\quad\forall c\in C\ \liminf\ \mathbf{d}(x_\lambda,c)\geq\mathbf{d}(x,c).\\
x_\lambda\arrowc x\quad&\Leftrightarrow\quad\forall c\in C\ \liminf\ \mathbf{d}(c,x_\lambda)\geq\mathbf{d}(c,x).
\end{align*}
Unless otherwise stated, we take $C=X$.
Also, just to be clear, by a net we mean a set indexed by a directed set $\Lambda$, i.e. we have (possibly non-reflexive) transitive $\mathbin{\prec}\subseteq\Lambda\times\Lambda$ satisfying $\forall\gamma,\delta\ \exists\lambda\ (\gamma,\delta\prec\lambda)$, with $\liminf$ and $\limsup$ defined by
\begin{align*}
\liminf_\lambda r_\lambda&=\lim_\gamma\inf_{\gamma\prec\lambda}r_\lambda.\\
\limsup_\lambda r_\lambda&=\lim_\gamma\sup_{\gamma\prec\lambda}r_\lambda.
\end{align*}

To see how hole topologies are analogous to product topologies, let us consider functions $X^Q$ from a set $Q$ to $X$ with respect to the supremum distance
\[\sup\!\text{-}\mathbf{d}(f,g)=\sup_{p\in Q}\mathbf{d}(f(p),g(p)).\]

\begin{prp}\label{X^Yprp}
\hfill$\forall p\in Q\ f_\lambda(p)\carrow f(p)\quad\Rightarrow\quad f_\lambda\carrow f$.\hfill\null\\
If $X$ has a $\leq^\mathbf{d}$-maximum then\hspace{7pt}$\forall p\in Q\ f_\lambda(p)\carrow f(p)\quad\Leftrightarrow\quad f_\lambda\carrow f$.
\end{prp}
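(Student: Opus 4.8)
The plan is to prove both directions by translating statements about $\sup\!\text{-}\mathbf{d}$ into statements about individual coordinates, exploiting the definition $x_\lambda\carrow x\Leftrightarrow\forall c\ \liminf\mathbf{d}(x_\lambda,c)\geq\mathbf{d}(x,c)$. The forward implication, which holds unconditionally, is the easy half. Here I would fix an arbitrary centre $g\in X^Q$ and observe that $\sup\!\text{-}\mathbf{d}(f_\lambda,g)\geq\mathbf{d}(f_\lambda(p),g(p))$ holds for every $p\in Q$ and every index $\lambda$, simply because the supremum dominates each of its terms. Since $\liminf$ is monotone, this yields $\liminf_\lambda\sup\!\text{-}\mathbf{d}(f_\lambda,g)\geq\liminf_\lambda\mathbf{d}(f_\lambda(p),g(p))$, and the hypothesis $f_\lambda(p)\carrow f(p)$ applied at the centre $g(p)$ bounds the latter below by $\mathbf{d}(f(p),g(p))$. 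As $p$ was arbitrary I can take the supremum over $p\in Q$ on the right to obtain $\liminf_\lambda\sup\!\text{-}\mathbf{d}(f_\lambda,g)\geq\sup\!\text{-}\mathbf{d}(f,g)$, which is precisely $f_\lambda\carrow f$.

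For the converse under the extra hypothesis, let $\top$ denote a $\leq^\mathbf{d}$-maximum, so that $\mathbf{d}(x,\top)=0$ for all $x\in X$. The key device is a ``spike'' centre: given a target coordinate $p_0\in Q$ and a centre $c\in X$, define $g\in X^Q$ by $g(p_0)=c$ and $g(p)=\top$ for $p\neq p_0$. Since distances are nonnegative and $\mathbf{d}(f_\lambda(p),\top)=0$ for $p\neq p_0$, the supremum collapses to a single coordinate, giving $\sup\!\text{-}\mathbf{d}(f_\lambda,g)=\mathbf{d}(f_\lambda(p_0),c)$, and likewise $\sup\!\text{-}\mathbf{d}(f,g)=\mathbf{d}(f(p_0),c)$. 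Feeding this centre $g$ into the assumption $f_\lambda\carrow f$ then reads $\liminf_\lambda\mathbf{d}(f_\lambda(p_0),c)\geq\mathbf{d}(f(p_0),c)$, which is exactly the coordinate statement $f_\lambda(p_0)\carrow f(p_0)$ tested against $c$. As $p_0$ and $c$ range freely, this establishes $\forall p\ f_\lambda(p)\carrow f(p)$ and completes the equivalence.

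I do not expect a genuine obstacle here, as both halves reduce to elementary manipulations of $\liminf$ and suprema. The one point worth flagging is precisely why the maximum is needed for the converse: without a $\leq^\mathbf{d}$-maximum there is in general no way to ``switch off'' the coordinates $p\neq p_0$ inside the supremum, so the single-coordinate distance $\mathbf{d}(f_\lambda(p_0),c)$ cannot be isolated as a value of $\sup\!\text{-}\mathbf{d}$, and one loses the ability to recover pointwise hole convergence from its sup-distance counterpart. Thus the asymmetry between the two clauses is structural rather than an artifact of the proof, and I would include a brief remark to that effect.
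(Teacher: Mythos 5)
Your proposal is correct and follows essentially the same route as the paper: the forward direction by bounding $\liminf_\lambda\sup\text{-}\mathbf{d}(f_\lambda,g)$ below coordinatewise, and the converse by the same ``spike'' construction, setting $g$ equal to the $\leq^\mathbf{d}$-maximum off a single coordinate so that the supremum distance collapses to $\mathbf{d}(f_\lambda(p_0),c)$. Your closing remark on why the maximum is structurally necessary is a reasonable addition, though not part of the paper's argument.
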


\begin{proof}
Assume $f_\lambda(p)\carrow f(p)$, for all $p\in Q$.  For any $g\in X^Q$ and $r<\sup$-$\mathbf{d}(f,g)$, we have $p\in Q$ with $r<\mathbf{d}(f(p),g(p))\leq\liminf_\lambda\mathbf{d}(f_\lambda(p),g(p))\leq\liminf_\lambda\sup\!\text{-}\mathbf{d}(f_\lambda,g)$.  Thus $\sup$-$\mathbf{d}(f,g)\leq\liminf_\lambda\sup\!\text{-}\mathbf{d}(f_\lambda,g)$ and hence $f_\lambda\carrow f$, as $g$ was arbitrary.


Now assume $X$ has a $\leq^\mathbf{d}$-maximum $1$ and $f_\lambda\carrow f$.  For any $p\in Q$ and $x\in X$, define $g\in X^Q$ by $g(p)=x$ and $g(q)=1$ for $q\in Q\setminus\{p\}$.  Then $f_\lambda(p)\carrow f(p)$ because $\mathbf{d}(f(p),x)=\sup$-$\mathbf{d}(f,g)\leq\liminf_\lambda\sup$-$\mathbf{d}(f_\lambda,g)=\liminf_\lambda\mathbf{d}(f_\lambda(p),x)$.
\end{proof}

\subsection{Cauchy Nets}

We are particularly interested in the following kinds of nets.
\begin{align}
\label{Cauchy}\lim_\gamma\sup_{\gamma\prec\delta}\mathbf{d}(x_\gamma,x_\delta)=0\quad&\Leftrightarrow\quad(x_\lambda)\text{ is \emph{$\mathbf{d}$-Cauchy}}.\\
\label{dominating}\sup_\gamma\lim_{\gamma\prec\delta}\mathbf{d}(x_\gamma,x_\delta)=0\quad&\Leftrightarrow\quad(x_\lambda)\text{ is \emph{$\mathbf{d}$-dominating}}.
\end{align}

\begin{dfn}\label{YC}
$X$ is \emph{$\mathbf{d}$-complete} if every $\mathbf{d}$-Cauchy net has a $X^\circ_\circ$-limit.
\end{dfn}

As noted in \cite{Bice2016b} (1.5) and (1.6), for $\mathbf{d}$-Cauchy or $\mathbf{d}$-dominating $(x_\lambda)\subseteq X$,
\begin{align}
\label{arrowc}x_\lambda\arrowc x\quad&\Leftrightarrow\quad\mathbf{d}(x_\lambda,x)\rightarrow0.\\
\label{carrowc}x_\lambda\carrowc x\quad&\Leftrightarrow\quad x_\lambda\carrowb x\leq^\mathbf{d}x.
\end{align}
In particular, if $\mathbf{d}$ is a hemimetric then in \autoref{YC} we could replace $X^\circ_\circ$ with $X^\circ_\bullet$, showing that $\mathbf{d}$-completeness is Yoneda completeness (see \cite{Goubault2013} Definition 7.4.1).  If $\mathbf{d}$ is a metric then $X_\circ=X^\circ$ so \eqref{arrowc} shows that $\mathbf{d}$-completeness generalizes the usual notion of Cauchy completeness.

On the other hand, if we identify a preorder $\mathbin{\preceq}\subseteq X\times X$ with the function
\[\preceq(x,y)=\begin{cases}0&\text{if }x\preceq y\\ \infty&\text{otherwise}\end{cases}\]
then $\preceq$-Cauchy nets are increasing and their $X^\circ_\circ$-limits are their supremums, i.e. $\preceq$-complete $\Leftrightarrow$ directed complete.  Indeed, the primary motivation for introducing this concept in \cite{Wagner1997} and \cite{Bonsangue1998} was to unify these metric and order theoretic notions of completeness.  In \cite{Bice2016b} we took this further, showing that in more general distances spaces $\mathbf{d}$-completeness could still be characterized by combinations of metric and directed completeness.  This was based on results that are also very pertinent to the present paper, as we shall see in \autoref{NS}.








For topological spaces $Q$ and $R$ let $C(Q,R)=\{f\in R^Q:f\text{ is continuous}\}$.

\begin{thm}\label{Dini}
If $Q$ is compact, $f\in C(Q,X_\bullet)$ and $(g_\lambda)\subseteq C(Q,X^\bullet)$ is $\sup$-$\mathbf{d}$-Cauchy,
\[\lim\limits_\lambda\sup\limits_{p\in Q}\mathbf{d}(f(p),g_\lambda(p))=\sup\limits_{p\in Q}\lim\limits_\lambda\mathbf{d}(f(p),g_\lambda(p)).\]
\end{thm}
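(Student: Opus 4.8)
The plan is to prove the two sides agree by first checking that each limit exists, then disposing of the trivial inequality, and finally extracting the reverse inequality from a Dini-style compactness argument. Throughout write $D_\lambda(p)=\mathbf{d}(f(p),g_\lambda(p))$, $T=\lim_\lambda\sup_{p}D_\lambda(p)$ and $S=\sup_p\lim_\lambda D_\lambda(p)$. First I would record the elementary consequence of the triangle inequality $\mathbf{d}\le\mathbf{d}\circ\mathbf{d}$ that, for $\gamma\prec\delta$,
\[D_\delta(p)\le D_\gamma(p)+\mathbf{d}(g_\gamma(p),g_\delta(p))\le D_\gamma(p)+\sup\!\text{-}\mathbf{d}(g_\gamma,g_\delta).\]
Feeding the $\sup$-$\mathbf{d}$-Cauchy hypothesis into this estimate and taking $\limsup_\delta$ then $\liminf_\gamma$ forces $\limsup_\lambda$ and $\liminf_\lambda$ to coincide for both $(D_\lambda(p))_\lambda$ and $(\sup_pD_\lambda(p))_\lambda$, so $\lim_\lambda D_\lambda(p)$ and $T$ exist. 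The inequality $S\le T$ is then immediate from $D_\lambda(p)\le\sup_{p'}D_\lambda(p')$.

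The crux is the reverse inequality $T\le S$, and the key technical step is that each $D_\lambda$ is upper semicontinuous on $Q$. Here I would exploit the two separate continuity hypotheses: since $f\in C(Q,X_\bullet)$, for every centre $c$ the map $p\mapsto\mathbf{d}(f(p),c)$ is usc, because the lower balls $c_\bullet^\epsilon$ are exactly the sublevel sets $\{\mathbf{d}(\cdot,c)<\epsilon\}$; and since $g_\lambda\in C(Q,X^\bullet)$, for every $c$ the map $p\mapsto\mathbf{d}(c,g_\lambda(p))$ is usc. Given $q_\nu\to q$ in $Q$, the triangle inequality through the centre $c=f(q)$ yields
\[\limsup_\nu D_\lambda(q_\nu)\le\mathbf{d}(f(q),f(q))+\mathbf{d}(f(q),g_\lambda(q)),\]
so usc follows once the diagonal term is absorbed via reflexivity $\mathbf{d}(f(q),f(q))=0$. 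I expect this semicontinuity step — reconciling the two separately usc coordinates into a usc diagonal — to be the main obstacle, since optimising the intermediate centre only gives the weaker bound $\limsup_\nu D_\lambda(q_\nu)\le\mathbf{d}\circ\mathbf{d}(f(q),g_\lambda(q))$ in general, and the diagonal distances must be handled with care.

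With usc in hand I would run the Dini argument. Fix $\epsilon>0$ and, using the Cauchy hypothesis, a threshold $\gamma_0$ with $\sup\!\text{-}\mathbf{d}(g_\gamma,g_\delta)<\epsilon$ whenever $\gamma_0\prec\gamma\prec\delta$. For each $q\in Q$ choose $\lambda_q\succ\gamma_0$ with $D_{\lambda_q}(q)<S+\epsilon$, which is possible because $\lim_\lambda D_\lambda(q)\le S$; by usc the sets $U_q=\{p:D_{\lambda_q}(p)<S+\epsilon\}$ are open and cover $Q$. Compactness gives a finite subcover $U_{q_1},\dots,U_{q_n}$, and directedness a common $\lambda_0\succ\lambda_{q_1},\dots,\lambda_{q_n}$. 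Then for $\lambda\succ\lambda_0$ and any $p$, choosing $i$ with $p\in U_{q_i}$ and applying the first-paragraph estimate gives $D_\lambda(p)\le D_{\lambda_{q_i}}(p)+\sup\!\text{-}\mathbf{d}(g_{\lambda_{q_i}},g_\lambda)<S+2\epsilon$, whence $\sup_pD_\lambda(p)\le S+2\epsilon$ and so $T\le S+2\epsilon$. Letting $\epsilon\to0$ yields $T\le S$, completing the proof.
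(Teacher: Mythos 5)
You have the right skeleton---existence of the two limits, the trivial inequality $S\le T$, and a covering-plus-Cauchy-tail argument---and your first and third paragraphs are sound as far as they go. But the step you yourself flag as ``the main obstacle'' is a genuine gap, not merely a delicate point: upper semicontinuity of $D_\lambda(p)=\mathbf{d}(f(p),g_\lambda(p))$ is not available under the theorem's hypotheses. The standing assumption in this section is only the triangle inequality \eqref{tri}; $\mathbf{d}$ is not assumed to be a hemimetric, so $\mathbf{d}(f(q),f(q))=0$ is precisely what you may not use, and your estimate $\limsup_\nu D_\lambda(q_\nu)\le\mathbf{d}(f(q),f(q))+\mathbf{d}(f(q),g_\lambda(q))$ does not reduce to usc when the diagonal term is positive. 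Since your covering sets $U_q=\{p:D_{\lambda_q}(p)<S+\epsilon\}$ are only known to be open via this unproved usc claim, the Dini argument in your last paragraph does not get off the ground as written. (If the theorem were restricted to hemimetrics, your proof would be complete, and structurally the same as the paper's.)

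The missing idea is the choice of intermediate centre. Instead of routing the triangle inequality through $f(q)$---which is what creates the diagonal term---the paper routes it through $g_\gamma(q)$ for a suitable earlier index $\gamma$ depending on $q$: having fixed $\epsilon>0$ and a Cauchy threshold $\lambda$, for each $q$ one picks $\delta\succ\gamma\succ\lambda$ with $\mathbf{d}(f(q),g_\gamma(q))<S+\epsilon$, and then
\[P=\{p\in Q:\mathbf{d}(f(p),g_\gamma(q))<S+\epsilon\},\qquad O=\{p\in Q:\mathbf{d}(g_\gamma(q),g_\delta(p))<\epsilon\}\]
are open neighbourhoods of $q$ (by $X_\bullet$-continuity of $f$ and $X^\bullet$-continuity of $g_\delta$, both applied with the \emph{same} centre $g_\gamma(q)$; note $q\in O$ by the Cauchy property since $\lambda\prec\gamma\prec\delta$), and $P\cap O\subseteq\{p:D_\delta(p)<S+2\epsilon\}$ by a triangle inequality in which no diagonal distance appears. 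So one never proves that any single $D_\delta$ is usc; one proves only that each $q$ lies in the interior of $\{p:D_\delta(p)<S+2\epsilon\}$ for \emph{some} sufficiently large $\delta$, which is all the compactness argument needs. If you replace your sets $U_q$ by these neighbourhoods $P\cap O$ (indexed by their associated $\delta$'s), your finite-subcover and Cauchy-tail conclusion then goes through essentially verbatim, giving $T\le S+3\epsilon$ and hence $T\le S$.
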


\begin{proof}
Let $s=\sup_{p\in Q}\lim_\lambda\mathbf{d}(f(p),g_\lambda(p))$, noting that the limit exists because $(g_\lambda(p))$ is $\mathbf{d}$-Cauchy \textendash\, see \cite{Bice2016b} Proposition 2.  For any $\epsilon>0$ and $\delta\in\Lambda$, define
\vspace{-4pt}
\[Q_\delta=\{p\in Q:\mathbf{d}(f(p),g_\delta(p))<s+2\epsilon\}.\]
As $(g_\lambda)$ is $\sup$-$\mathbf{d}$-Cauchy, we have $\lambda\in\Lambda$ such that $\mathbf{d}(g_\gamma(p),g_\delta(p))<\epsilon$ whenever $\lambda\prec\gamma\prec\delta$.  We claim that each $q\in Q$ is contained in the interior of $Q_\delta$, for some $\delta\succ\lambda$.  To see this, take $\delta\succ\gamma\succ\lambda$ with $\mathbf{d}(f(q),g_\gamma(q))<s+\epsilon$ and consider the sets
\begin{align*}
P&=\{p\in Q:\mathbf{d}(f(p),g_\gamma(q))<s+\epsilon\}.\\
O&=\{p\in Q:\mathbf{d}(g_\gamma(q),g_\delta(p))<\epsilon\}.
\end{align*}
As $f$ is $X_\bullet$-continuous, $P$ is an open neighbourhood of $q$.  As $g_\delta$ is $X^\bullet$-continuous, $O$ is also an open neighbourhood of $q$.  As $\mathbf{d}$ is a distance, $P\cap O\subseteq Q_\delta$.

As $Q$ is compact, we have a finite cover $Q_{\gamma_1},\ldots,Q_{\gamma_n}$ of $Q$ with $\lambda\prec\gamma_1,\ldots,\gamma_n$.  For all $\delta\succ\gamma_1,\ldots,\gamma_n$ and $p\in Q$, we have $p\in Q_{\gamma_k}$ for some $k$ and hence
\[\mathbf{d}(f(p),g_\delta(p))\leq\mathbf{d}(f(p),g_{\gamma_k}(p))+\mathbf{d}(g_{\gamma_k}(p),g_\delta(p))<s+3\epsilon.\]
So $\sup_{p\in Q}\mathbf{d}(f(p),g_\delta(p))<s+3\epsilon$ and hence $\lim_\lambda\sup_{p\in Q}\mathbf{d}(f(p),g_\lambda(p))\leq s$, as $\delta$ and $\epsilon$ were arbitrary.  The reverse inequality is immediate.
\end{proof}

Say $(g_\lambda)$ is an increasing net of continuous functions from some compact $Q$ to $\mathbb{R}$ converging pointwise to a continuous function $f$.  Thus $(g_\lambda)$ is $\mathbf{d}$-Cauchy for the quasimetric $\mathbf{d}(s,t)=|s-t|_\leq=(s-t)\vee0$ on $\mathbb{R}$.  Applying \autoref{Dini},
\[\lim\limits_\lambda\sup\limits_{p\in Q}|f(p)-g_\lambda(p)|=\lim\limits_\lambda\sup\limits_{p\in Q}\mathbf{d}(f(p),g_\lambda(p))=\sup\limits_{p\in Q}\lim\limits_\lambda\mathbf{d}(f(p),g_\lambda(p))=0,\]
i.e. $(g_\lambda)$ converges uniformly to $f$.  Thus \autoref{Dini} generalizes Dini's theorem.

The extra generality gained by not requiring $f$ to be a pointwise limit of $(g_\lambda)$ is not so important when $X=\mathbb{R}$, or even when $X$ is a lattice-ordered unital normed space, for as long as $f$ is $X^\bullet_\bullet$-continuous then we can replace each $g_\lambda$ with $(g_\lambda+s)\wedge f$, where $s=\sup_{p\in Q}\lim_\lambda\mathbf{d}(f(p),g_\lambda(p))$.  However, it is important for more general distance spaces without lattice or vector space operations.  It would also be interesting to know if \autoref{Dini} could be obtained from some purely topological version of Dini's theorem, like that given in \cite{Kupka1998}.

\subsection{Finiteness}  Define the $\mathbf{d}$-\emph{finite} elements $X^\mathsf{F}$ of $X$ by
\[X^\mathsf{F}=\{y\in X:\mathbf{d}(y,x_\lambda)\rightarrow\mathbf{d}(y,x)\text{ whenever $(x_\lambda)\subseteq X$ is $\mathbf{d}$-Cauchy and }x_\lambda\carrowc x\}.\]
This comes from \cite{Goubault2013} Definition 7.4.55 and corresponds to the usual notion of finite when $X=[0,\infty]$ and $\mathbf{d}(x,y)=|x-y|_\leq$, i.e. $[0,\infty]^\mathsf{F}=[0,\infty)$, as noted in \cite{Goubault2013} Exercise 7.4.57.  In general, $\mathbf{d}$-finiteness can also be defined from the way-below distance $\mathbf{dd}$ from \cite{KostanekWaszkiewicz2011} \S9, specifically $x\in X^\mathsf{F}\,\Leftrightarrow\,x\leq^\mathbf{dd}x$ where
\[\mathbf{dd}(x,y)=\sup\{\lim_\lambda|\mathbf{d}(x,z_\lambda)-\mathbf{d}(y,z)|_\leq:(z_\lambda)\text{ is $\mathbf{d}$-Cauchy and }z_\lambda\carrowc z\}.\]
Also, whenever $(x_\lambda)\subseteq X$ is $\mathbf{d}$-Cauchy and $x_\lambda\carrowc x$, $\mathbf{d}(y,x)\leq\lim\mathbf{d}(y,x_\lambda)$ so, in the definition of $X^\mathsf{F}$, we can replace $\mathbf{d}(y,x_\lambda)\rightarrow\mathbf{d}(y,x)$ with $\lim\mathbf{d}(y,x_\lambda)\leq\mathbf{d}(y,x)$.

The $\mathbf{d}$-finite elements are important because of the \emph{$\mathbf{d}$-finite topology} $X^{\mathsf{F}\bullet}$ they define on $X$.  This is an analog of the Scott topology on $\mathbb{R}$ (see \cite{Goubault2013} Proposition 7.4.68), at least when $X$ is $\mathbf{d}$-complete (and $\mathbf{d}$-algebraic, as defined below).


\begin{thm}\label{Zcont}
If $X$ is $\mathbf{d}$-complete then $C(Q,X^{\mathsf{F}\bullet})$ is $\sup$-$\mathbf{d}$-complete.
\end{thm}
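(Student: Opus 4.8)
The plan is to construct the limit pointwise and then verify separately two things: that the resulting function lies in $C(Q,X^{\mathsf{F}\bullet})$, and that it is the limit of the given net in the $\sup\text{-}\mathbf{d}$ hole topology. So let $(f_\lambda)\subseteq C(Q,X^{\mathsf{F}\bullet})$ be $\sup\text{-}\mathbf{d}$-Cauchy. For each fixed $p\in Q$ we have $\mathbf{d}(f_\gamma(p),f_\delta(p))\leq\sup\text{-}\mathbf{d}(f_\gamma,f_\delta)$, so $(f_\lambda(p))$ is $\mathbf{d}$-Cauchy and hence, by $\mathbf{d}$-completeness, has an $X^\circ_\circ$-limit which I call $f(p)$. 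This defines $f\in X^Q$ with $f_\lambda(p)\carrowc f(p)$ and, via \eqref{arrowc}, $\mathbf{d}(f_\lambda(p),f(p))\to0$ for every $p$.

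First I would upgrade this to uniform convergence. Given $\epsilon>0$, the $\sup\text{-}\mathbf{d}$-Cauchy condition produces $\gamma_0$ with $\mathbf{d}(f_\gamma(p),f_\delta(p))<\epsilon$ for all $p$ whenever $\gamma_0\prec\gamma\prec\delta$. Fixing $\gamma\succ\gamma_0$ and applying the triangle inequality \eqref{tri} along a cofinal family of $\delta$, together with $\mathbf{d}(f_\delta(p),f(p))\to0$, yields $\mathbf{d}(f_\gamma(p),f(p))\leq\epsilon$ for every $p$, hence $\sup\text{-}\mathbf{d}(f_\gamma,f)\leq\epsilon$. Thus $\sup\text{-}\mathbf{d}(f_\lambda,f)\to0$, which by \eqref{arrowc} (applied now to $\sup\text{-}\mathbf{d}$) gives $f_\lambda\arrowc f$. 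For the complementary direction, the pointwise convergence $f_\lambda(p)\carrow f(p)$ feeds directly into \autoref{X^Yprp} to give $f_\lambda\carrow f$. Combining, $f_\lambda\carrowc f$, so $f$ is the desired $\sup\text{-}\mathbf{d}$-limit provided $f\in C(Q,X^{\mathsf{F}\bullet})$.

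The remaining, and main, task is precisely this continuity. By definition of $X^{\mathsf{F}\bullet}$ I must show that $q\mapsto\mathbf{d}(c,f(q))$ is upper semicontinuous for each $\mathbf{d}$-finite centre $c\in X^\mathsf{F}$, i.e. that $\{q:\mathbf{d}(c,f(q))<\alpha\}$ is open. This is where $\mathbf{d}$-finiteness does the essential work: since $(f_\lambda(q))$ is $\mathbf{d}$-Cauchy with $f_\lambda(q)\carrowc f(q)$, the defining property of $X^\mathsf{F}$ gives the pointwise convergence $\mathbf{d}(c,f_\lambda(q))\to\mathbf{d}(c,f(q))$. To verify openness at a point $q_0$ with $\mathbf{d}(c,f(q_0))<\alpha$, I would pick $\lambda$ large enough to satisfy simultaneously a small uniform estimate on $\sup\text{-}\mathbf{d}(f_\lambda,f)$ and a close pointwise estimate on $\mathbf{d}(c,f_\lambda(q_0))$; the upper semicontinuity of $q\mapsto\mathbf{d}(c,f_\lambda(q))$, available because $f_\lambda\in C(Q,X^{\mathsf{F}\bullet})$, then furnishes an open neighbourhood of $q_0$ on which $\mathbf{d}(c,f_\lambda(\cdot))$ stays below $\alpha$, and a final application of $\mathbf{d}(c,f(q))\leq\mathbf{d}(c,f_\lambda(q))+\sup\text{-}\mathbf{d}(f_\lambda,f)$ transfers the bound to $f$.

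I expect this continuity step to be the crux. The subtlety is that the non-symmetric distance supplies only the one-sided uniform bound $\mathbf{d}(c,f(q))\leq\mathbf{d}(c,f_\lambda(q))+\sup\text{-}\mathbf{d}(f_\lambda,f)$, so uniform convergence by itself does not let me push upper semicontinuity through the limit; I must additionally invoke the pointwise convergence $\mathbf{d}(c,f_\lambda(q_0))\to\mathbf{d}(c,f(q_0))$ at the base point, and it is exactly the $\mathbf{d}$-finiteness of $c$ that guarantees this. Note that the argument is purely local around each $q_0$, so no compactness of $Q$ is required here, in contrast with \autoref{Dini}.
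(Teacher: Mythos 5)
Your proposal is correct and takes essentially the same route as the paper's proof: pointwise $X^\circ_\circ$-limits from $\mathbf{d}$-completeness, \autoref{X^Yprp} together with \eqref{arrowc} to obtain $f_\lambda\carrowc f$ and the uniform estimate $\sup\text{-}\mathbf{d}(f_\lambda,f)\to0$, and then the $\mathbf{d}$-finiteness of the centre $c$, the continuity of $f_\lambda$ and the uniform bound combining to transfer upper semicontinuity from $\mathbf{d}(c,f_\lambda(\cdot))$ to $\mathbf{d}(c,f(\cdot))$. The only cosmetic difference is the routing of one equivalence: you prove uniform convergence directly by a triangle-inequality argument and then read off $f_\lambda\arrowc f$ from \eqref{arrowc}, whereas the paper gets $f_\lambda\arrowc f$ by applying \autoref{X^Yprp} (to the opposite distance as well) and then cites \eqref{arrowc} to get the uniform estimate.
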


\begin{proof}
If $(f_\lambda)\subseteq C(Q,X^{\mathsf{F}\bullet})$ is $\sup$-$\mathbf{d}$-Cauchy then, for each $p\in Q$, $(f_\lambda(p))\subseteq X$ is $\mathbf{d}$-Cauchy.  As $X$ is $\mathbf{d}$-complete, we have $f\in X^Q$ with $f_\lambda(p)\carrowc f(p)$, for all $p\in Q$, and hence $f_\lambda\carrowc f$, by \autoref{X^Yprp}.  All we need to show is that $f\in C(Q,X^{\mathsf{F}\bullet})$.

By \eqref{arrowc}, for all $\epsilon>0$ and sufficiently large $\lambda$, we have
\begin{equation}\label{C(Y,XZ)1}
\sup\!\text{-}\mathbf{d}(f_\lambda,f)<\epsilon.
\end{equation}
By the definition of $X^\mathsf{F}$, for all $p\in Q$, $c\in X^\mathsf{F}$ and sufficiently large $\lambda$, we have
\begin{equation}\label{C(Y,XZ)2}
\mathbf{d}(c,f_\lambda(p))<\mathbf{d}(c,f(p))+\epsilon.
\end{equation}
For all $q$ in a neighbourhood of $p$, we also have
\begin{align*}
\mathbf{d}(c,f_\lambda(p)) &>\mathbf{d}(c,f_\lambda(q))-\epsilon,\quad\text{as }f_\lambda\in C(Q,X^{\mathsf{F}\bullet}),\text{ so}\\
\mathbf{d}(c,f(p)) &>\mathbf{d}(c,f_\lambda(q))-2\epsilon,\quad\text{by \eqref{C(Y,XZ)2}},\\
&\geq\mathbf{d}(c,f(q))-\mathbf{d}(f_\lambda(q),f(q))-2\epsilon,\text{ by \eqref{tri}},\\
&>\mathbf{d}(c,f(q))-3\epsilon,\text{ by \eqref{C(Y,XZ)1}}.
\end{align*}
As $\epsilon>0$ and $p\in Q$ were arbitrary, $f\in C(Q,X^{\mathsf{F}\bullet})$.
\end{proof}


Note \autoref{Zcont} simultaneously generalizes the following facts.
\begin{enumerate}
\item\label{fact1} The continuous functions to a metric space are Cauchy complete.
\item\label{fact2} The lower semicontinuous functions to $[0,\infty]$ are directed complete.
\end{enumerate}
Indeed, if $\mathbf{d}$ is a metric then $X=X^\mathsf{F}$, as noted in Proposition 7.4.59, giving \eqref{fact1}.  While if $\mathbf{d}(x,y)=|x-y|_\leq$ on $[0,\infty]$ then $[0,\infty]^{\mathsf{F}\bullet}$ is the Scott topology, giving \eqref{fact2}.


For any $Y\subseteq X$, define $Y^\sigma\subseteq Y^\mathsf{m}\subseteq Y^\mathsf{D}\subseteq Y^\mathsf{C}$ (see \cite{Bice2016b} \S1 for $Y^\mathsf{D}\subseteq Y^\mathsf{C}$) by
\begin{align*}
Y^\mathsf{C}\ &=\ X^\circ_\circ\text{-limits in $X$ of $\mathbf{d}$-Cauchy nets in }Y.\\
Y^\mathsf{D}\ &=\ X^\circ_\circ\text{-limits in $X$ of $\mathbf{d}$-dominating nets in }Y.\\
Y^\mathsf{m}\ &=\ X^\circ_\circ\text{-limits in $X$ of $\leq^\mathbf{d}$-increasing nets in }Y.\\
Y^\sigma\ &=\ X^\circ_\circ\text{-limits in $X$ of $\leq^\mathbf{d}$-increasing sequences in }Y.
\end{align*}
Note the $^\mathsf{m}$ here stands for `monotone' increasing and comes from \cite{AkemannPedersen1973}.
\begin{dfn}[\cite{Goubault2013} Definition 7.4.62]
We call $X$ \emph{$\mathbf{d}$-algebraic} if $X=X^\mathsf{FC}$.
\end{dfn}

If $X=Y^\mathsf{C}$ then, to prove $Y\subseteq X^\mathsf{F}$, we need only verify finiteness for nets in $Y$.

\begin{thm}\label{CauchyCauchy}
If $X=Y^\mathsf{C}$ and $\mathbf{d}(y,x_\lambda)\rightarrow\mathbf{d}(y,x)$, whenever $y\in Y$, $(x_\lambda)\subseteq Y$ is $\mathbf{d}$-Cauchy and $x_\lambda\carrowc x\in X$, then $Y\subseteq X^\mathsf{F}$ and hence $X$ is $\mathbf{d}$-algebraic.
\end{thm}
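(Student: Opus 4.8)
The plan is to reduce everything to the inclusion $Y\subseteq X^\mathsf{F}$. Granting this, every $\mathbf{d}$-Cauchy net in $Y$ is a $\mathbf{d}$-Cauchy net in $X^\mathsf{F}$, so passing to $X^\circ_\circ$-limits gives $X=Y^\mathsf{C}\subseteq X^\mathsf{FC}$; since $X^\mathsf{FC}\subseteq X$ automatically (the limits are taken in $X$), this forces $X=X^\mathsf{FC}$, i.e. $\mathbf{d}$-algebraicity. So the entire content is that each $y\in Y$ lies in $X^\mathsf{F}$.

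Fix $y\in Y$ and an arbitrary $\mathbf{d}$-Cauchy net $(x_\lambda)\subseteq X$ with $x_\lambda\carrowc x$; by the remark following the definition of $X^\mathsf{F}$ it suffices to prove $\lim_\lambda\mathbf{d}(y,x_\lambda)\leq\mathbf{d}(y,x)$, the reverse inequality being automatic and the limit existing by \cite{Bice2016b} Proposition 2. Since $X=Y^\mathsf{C}$, each $x_\lambda$ is the $X^\circ_\circ$-limit of some $\mathbf{d}$-Cauchy net $(y^\lambda_\mu)_{\mu\in M_\lambda}\subseteq Y$, so $y^\lambda_\mu\carrowc x_\lambda$ and, by \eqref{arrowc}, $\mathbf{d}(y^\lambda_\mu,x_\lambda)\to0$. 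Crucially, the theorem's hypothesis applies to each such net with any centre in $Y$, giving $\mathbf{d}(z,y^\lambda_\mu)\to\mathbf{d}(z,x_\lambda)$ for every $z\in Y$; in particular $\mathbf{d}(y,y^\lambda_\mu)\to\mathbf{d}(y,x_\lambda)$.

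I would then splice these approximating nets into a single net in $Y$ over the product directed set $\Lambda\times\prod_\lambda M_\lambda$, setting $w_{(\lambda,f)}=y^\lambda_{f(\lambda)}$. The classical iterated-limit theorem for nets, applied in the $X^\circ_\circ$ topology to $y^\lambda_\mu\carrowc x_\lambda$ and $x_\lambda\carrowc x$, shows $w_{(\lambda,f)}\carrowc x$; applied to the real nets $\mathbf{d}(y,y^\lambda_\mu)\to\mathbf{d}(y,x_\lambda)$ and $\mathbf{d}(y,x_\lambda)\to\lim_\lambda\mathbf{d}(y,x_\lambda)$, it shows $\mathbf{d}(y,w_{(\lambda,f)})\to\lim_\lambda\mathbf{d}(y,x_\lambda)$. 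Once I know this diagonal net is itself $\mathbf{d}$-Cauchy, the hypothesis applies to it directly (it lives in $Y$ and $\carrowc$-converges to $x$) and forces $\mathbf{d}(y,w_{(\lambda,f)})\to\mathbf{d}(y,x)$; comparing the two value limits yields $\lim_\lambda\mathbf{d}(y,x_\lambda)=\mathbf{d}(y,x)$, completing the proof.

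The hard part is showing the diagonal net is $\mathbf{d}$-Cauchy. Writing $\gamma=(\lambda,f)$ and $\delta=(\lambda',f')\succ\gamma$, the obvious triangle estimate of $\mathbf{d}(w_\gamma,w_\delta)$ breaks at the term $\mathbf{d}(x_{\lambda'},w_\delta)$: our convergence only ever controls the forward distance $\mathbf{d}(w,x_{\lambda'})$ of an approximant to its limit, never the backward distance $\mathbf{d}(x_{\lambda'},w)$. The idea that unlocks this is to invoke the hypothesis a second time, now with the earlier point $w_\gamma\in Y$ as centre: finiteness along $(y^{\lambda'}_\mu)_\mu\carrowc x_{\lambda'}$ yields $\lim_\mu\mathbf{d}(w_\gamma,y^{\lambda'}_\mu)=\mathbf{d}(w_\gamma,x_{\lambda'})$, so for $f'(\lambda')$ large the quantity $\mathbf{d}(w_\gamma,w_\delta)=\mathbf{d}(w_\gamma,y^{\lambda'}_{f'(\lambda')})$ is dominated by the forward-computable $\mathbf{d}(w_\gamma,x_{\lambda'})\leq\mathbf{d}(w_\gamma,x_\lambda)+\mathbf{d}(x_\lambda,x_{\lambda'})$, which is small once $\gamma$ is large. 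The delicate residual point is that the index beyond which this estimate holds depends on the varying centre $w_\gamma$, so making it uniform over the whole tail $\delta\succ\gamma$ — rather than for a single diagonal choice — is exactly where I expect to need the $\mathbf{d}$-Cauchy composition machinery of \cite{Bice2016b}.
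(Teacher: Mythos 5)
Your overall plan is in fact the paper's own strategy: approximate each $x_\lambda$ by a $\mathbf{d}$-Cauchy net from $Y$, splice these into a single $\mathbf{d}$-Cauchy net in $Y$ with $\carrowc$-limit $x$, and apply the hypothesis twice (once with centres in $Y$ to control the splicing, once to the spliced net itself); your reduction to $Y\subseteq X^\mathsf{F}$, the use of the remark after the definition of $X^\mathsf{F}$, and the final comparison of limits are all sound. The gap is the step you defer, and it is not a technicality that general machinery can absorb: over the index set $\Lambda\times\prod_\lambda M_\lambda$ with the product order, the diagonal net $w_{(\lambda,f)}=y^\lambda_{f(\lambda)}$ is genuinely \emph{not} $\mathbf{d}$-Cauchy in general, so nothing will prove that it is. Structurally, $(\lambda',f')\succ(\lambda,f)$ only forces $f'(\lambda')\succeq f(\lambda')$, and $f(\lambda')$ bears no relation to the threshold in $M_{\lambda'}$ \textendash\ which depends on the centre $w_{(\lambda,f)}$ \textendash\ beyond which $\mathbf{d}(w_{(\lambda,f)},y^{\lambda'}_\mu)$ approximates $\mathbf{d}(w_{(\lambda,f)},x_{\lambda'})$. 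Concretely, take $X=\mathcal{P}(\mathbb{N}\times\mathbb{N})$ with $\mathbf{d}(A,B)=0$ if $A\subseteq B$ and $\infty$ otherwise, $Y$ the finite sets, $x_n=\mathbb{N}\times\{0,\dots,n\}$, and $y^n_S=S$ for $S\in M_n=$ the finite subsets of $x_n$; every hypothesis of the theorem holds (here $\mathbf{d}$-Cauchy nets are eventually increasing, their limits are unions of tails, and a finite $y$ below such a union eventually lies below the terms). Yet given any $(n_0,f_0)$, choose $p\in x_{n_0+1}\setminus f_0(n_0+2)$ and let $f$ agree with $f_0$ except $f(n_0+1)=f_0(n_0+1)\cup\{p\}$; then $(n_0+2,f)\succ(n_0+1,f)\succ(n_0,f_0)$ while $\mathbf{d}\bigl(w_{(n_0+1,f)},w_{(n_0+2,f)}\bigr)=\mathbf{d}\bigl(f(n_0+1),f_0(n_0+2)\bigr)=\infty$. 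So Cauchyness fails cofinally, even though, exactly as you argue via iterated limits, $w_{(\lambda,f)}\carrowc x$ and $\mathbf{d}(y,w_{(\lambda,f)})\rightarrow\lim_\lambda\mathbf{d}(y,x_\lambda)$ do hold.

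The missing idea \textendash\ which is the real content of the theorem \textendash\ is that the index set must be pruned and re-ordered, not merely estimated over. The paper's directed set $\nabla$ admits only triples $(\lambda,\mu,\epsilon)$ whose approximant is uniformly good along the whole tail of the original net, $\sup_{\lambda\prec\zeta}\mathbf{d}(y^\lambda_\mu,x_\zeta)<\epsilon$, and its order builds the required Cauchy estimate into the relation itself: $(\lambda,\mu,\epsilon)\prec(\lambda',\mu',\epsilon')$ demands $\mathbf{d}(y^\lambda_\mu,y^{\lambda'}_{\mu'})<\epsilon-\epsilon'$. Transitivity is then precisely the triangle inequality, Cauchyness of the spliced net is automatic because the radii $\epsilon$ decrease to $0$ along $\nabla$, and all the work goes into proving $\nabla$ is \emph{directed} \textendash\ which is where your idea of invoking the hypothesis with earlier approximants as centres is actually deployed: given two elements of $\nabla$, one moves far enough along $(x_\zeta)$ and uses the hypothesis, with both old approximants (elements of $Y$) as centres, to pick a new approximant of $x_\zeta$ simultaneously close to them and to the tail. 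In the counterexample above the rescue takes the analogous form of restricting to the cofinal directed set of ``self-dominating'' pairs $(n,f)$ with $f(m)\subseteq f(n)$ for all $m\leq n$, on which the net is increasing; the quantitative version of this restriction is exactly the $\epsilon$-bookkeeping of $\nabla$. Without such a re-indexing, your proof cannot be completed as written.
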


\begin{proof}
Take $x\in X$ and $\mathbf{d}$-Cauchy $(x_\lambda)_{\lambda\in\Lambda}\subseteq X$ with $x_\lambda\carrowc x$.  As $X=Y^\mathsf{C}$, for each $\lambda$ we have $\mathbf{d}$-Cauchy $(y_\lambda^\gamma)_{\gamma\in\Gamma_\lambda}\subseteq Y$ such that $y_\lambda^\gamma\carrowc x_\lambda$.  Order
\begin{gather*}
\nabla=\{(\lambda,\gamma,\epsilon):\lambda\in\Lambda,\gamma\in\Gamma_\lambda,\epsilon>0\text{ and }s_\lambda^\gamma=\sup_{\lambda\prec\zeta}\mathbf{d}(y_\lambda^\gamma,x_\zeta)<\epsilon\}\text{ by}\\
(\lambda,\gamma,\epsilon)\prec(\zeta,\eta,\delta)\quad\Leftrightarrow\quad\lambda\prec\zeta\text{ and }\mathbf{d}(y_{\lambda}^{\gamma},y_\zeta^\eta)<\epsilon-\delta.
\end{gather*}
By \eqref{tri}, $\prec$ is transitive and we claim that $\nabla$ is also directed.  To see this, take $(\lambda,\gamma,\epsilon),(\lambda',\gamma',\epsilon')\in\nabla$.  By the definition of $\nabla$, we have positive $\delta<\epsilon-s_\lambda^\gamma,\epsilon'-s_{\lambda'}^{\gamma'}$.  As $(x_\lambda)$ is $\mathbf{d}$-Cauchy, we can take $\zeta\succ\lambda,\lambda'$ such that
\[t_\zeta=\sup_{\zeta<\xi}\mathbf{d}(x_\zeta,x_\xi)<\delta.\]
As $\mathbf{d}(y_\lambda^\gamma,x_\zeta)\leq s_\lambda^\gamma<\epsilon-\delta$, $\mathbf{d}(y_{\lambda'}^{\gamma'},x_\zeta)\leq s_{\lambda'}^{\gamma'}<\epsilon'-\delta$ and $y_\zeta^\eta\carrowc x_\zeta$, by assumption (and \eqref{arrowc}) we can take $\eta\in\Gamma_\zeta$ with
\[\mathbf{d}(y_\lambda^\gamma,y_\zeta^\eta)<\epsilon-\delta,\quad\mathbf{d}(y_{\lambda'}^{\gamma'},y_\zeta^\eta)<\epsilon'-\delta\quad\text{and}\quad\mathbf{d}(y_\zeta^\eta,x_\zeta)<\delta-t_\zeta.\]
So $\sup_{\zeta<\xi}\mathbf{d}(y_\zeta^\eta,x_\xi)\leq\mathbf{d}(y_\zeta^\eta,x_\zeta)+\sup_{\zeta<\xi}\mathbf{d}(x_\zeta,x_\xi)<\delta-t_\zeta+t_\zeta=\delta$ and thus
\[(\lambda,\gamma,\epsilon),(\lambda',\gamma',\epsilon')\prec(\zeta,\eta,\delta)\in\nabla,\]
i.e. $\nabla$ is directed.  Moreover, we can make $\delta$ above as small as we like which, as $\nabla$ is non-empty, shows that $\inf_{(\lambda,\gamma,\epsilon)\in\nabla}\epsilon=0$ and hence $(y_\lambda^\gamma)_{(\lambda,\gamma,\epsilon)\in\nabla}$ is $\mathbf{d}$-Cauchy.

For any $(\lambda,\gamma,\epsilon)\in\nabla$, the definition of $\nabla$ and the fact that $x_\lambda\arrowc x$ yields $\mathbf{d}(y_\lambda^\gamma,x)\leq\liminf\mathbf{d}(y_\lambda^\gamma,x_\zeta)\leq s_\lambda^\gamma<\epsilon$, so $y^\gamma_\lambda\arrowc x$.
Also, for $c\in X$ and $r<\mathbf{d}(x,c)$, we can take $\zeta$ as above (with $(\lambda,\gamma,\epsilon)=(\lambda',\gamma',\epsilon')$) so that we also have $r<\mathbf{d}(x_\zeta,c)$, as $x_\lambda\carrow x$, and likewise we can take $\eta\in\Gamma_\zeta$ above so that we also have $r<\mathbf{d}(y_\zeta^\eta,c)$, as $y_\zeta^\eta\carrow x_\zeta$.  Thus $\mathbf{d}(x,c)\leq\lim_{(\lambda,\gamma,\epsilon)\in\nabla}\mathbf{d}(y_\zeta^\eta,c)$ so $y_\lambda^\gamma\carrow x$ and hence $y_\lambda^\gamma\carrowc x$.



Now, for any $y\in Y$, $(\lambda,\gamma,\epsilon)\in\nabla$ and $\lambda\prec\zeta$, we have
\[\mathbf{d}(y,x_\zeta)\leq\mathbf{d}(y,y_\lambda^\gamma)+\mathbf{d}(y_\lambda^\gamma,x_\zeta)<\mathbf{d}(y,y_\lambda^\gamma)+\epsilon.\]
Thus $\lim_{\zeta\in\Lambda}\mathbf{d}(y,x_\zeta)\leq\lim_{(\lambda,\gamma,\epsilon)\in\nabla}\mathbf{d}(y,y_\lambda^\gamma)=\mathbf{d}(y,x)$, by assumption.  As $y\in Y$ and $\mathbf{d}$-Cauchy $(x_\lambda)\subseteq X$ with $x_\lambda\carrowc x$ were arbitrary, we have $Y\subseteq X^\mathsf{F}$.
\end{proof}

Before moving on, let us make one more definition.  We call $Y\subseteq X$ \emph{$\mathbf{d}$-bounded} if
\[\inf_{x\in X}\sup_{y\in Y}\mathbf{d}(x,y)<\infty.\]
Note this simultaneously generalizes the usual metric and order theoretic notions of boundedness (again identifying any preorder $\preceq$ with its characteristic function).

\section{Banach Spaces}\label{NS}

Throughout this section, we assume
\[X\textbf{ is a preordered Banach space},\]
i.e. $X$ is a real Banach space with a preorder $\leq$ that is compatible with the normed space structure of $X$.  So $X_+=\{x\in X:x\geq0\}$ satisfies
\[X_+=\overline{X_+}=\mathbb{R}_+X_+=X_++X_+,\]
where $\overline{Y}$ denotes the norm closure of $Y$, i.e. the closure in the $\mathbf{e}$-ball topology, which we denote by $X\!\bullet$, where $\mathbf{e}$ is the canonical metric defined by
\[\mathbf{e}(x,y)=||x-y||.\]
Equivalently, any such $X_+$ defines a preorder $x\leq y\ \Leftrightarrow\ y-x\in X_+$ which turns $X$ into an preordered normed space with $X_+=\{x\in X:x\geq0\}$.  We denote the unit ball by $X^1=\{x\in X:||x||\leq1\}$ and the positive unit ball by $X^1_+=X^1\cap X_+$.

Now $X$ also has a canonical half-seminorm $||\cdot||_\leq$ defined by
\[||a||_\leq=\inf_{a\leq b}||b||\]
(see \cite{RobinsonYamamuro1983b}).  This in turn yields a canonical hemimetric $\mathbf{d}$ defined by
\[\mathbf{d}(a,b)=||a-b||_\leq\]
(see \cite{Cobzac2013}) with $\mathbin{\leq}=\mathbin{\leq^\mathbf{d}}$.  The dual $X^*$ is also naturally ordered by
\[X^*_+=\{\phi\in X^*:\phi[X_+]\subseteq\mathbb{R}_+\}.\]
Defining $Q=X^{*1}_+$ (with the weak* topology), \cite{RobinsonYamamuro1983b} Theorem 2.1 yields
\[||x||_\leq=\sup_{\phi\in Q}\phi(x).\]
We now identify $X$ with its canonical image in $X^{**}$ and take this as a definition for extending $||\cdot||_\leq$ and hence $\mathbf{d}$ (and hence $\mathbin{\leq}=\mathbin{\leq^\mathbf{d}}$) to $X^{**}$.  Define
\[X^\mathsf{S}=\{x\in X^{**}:x\text{ is lower semicontinuous on }Q\}.\]
Also let $X^{1>}=\{x\in X:||x||<1\}=\mathrm{int}(X^1)$.



\begin{thm}\label{X**sc}
$X^\mathsf{S1}\subseteq X^\mathsf{1C}$ is $\mathbf{d}$-complete and $\mathbf{d}$-algebraic with $X^1\subseteq X^\mathsf{S1F}$ and
\begin{align*}
X^\mathsf{C} &=X^\mathsf{D}.\\
X^\mathsf{C} &=X^\sigma\quad\text{if $X\!\bullet$ is separable}.\\
X^\mathsf{C} &=X^\mathsf{m}\quad\text{if $X^1$ is $\leq$-bounded}.\\
\hspace{-60pt}\text{If $X^*=X^*_+-X^*_+$ then}\ \ X^\mathsf{C} &=X^\mathsf{S},\ X^\mathsf{SF}=X\ \ \text{and}\\
X^\mathsf{C} &=\overline{X^\mathsf{m}}\quad\text{if $X^{1>}$ is $\leq$-directed}.
\end{align*}
\end{thm}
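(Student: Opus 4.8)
The plan is to transport the entire statement to the space of real functions on $Q$ under the supremum distance, where the machinery of \autoref{D} applies with base space $\mathbb R$. Since $Q=X^{*1}_+$ is weak* compact Hausdorff, each $x\in X^{**}$ restricts to an affine function $\hat x(\phi)=\phi(x)$ on $Q$ with $\hat x(0)=0$, and by $\|z\|_\leq=\sup_{\phi\in Q}\phi(z)$ the extended distance becomes
\[\mathbf d(x,y)=\|x-y\|_\leq=\sup_{\phi\in Q}\bigl(\hat x(\phi)-\hat y(\phi)\bigr).\]
Because $0\in Q$ annihilates every $\hat x$, this supremum is non-negative, so $\mathbf d$ on $X^{**}$ is exactly the $\sup$-$\mathbf d$ distance built from the quasimetric $|s-t|_\leq$ on $\mathbb R$. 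Under this identification $X\subseteq C(Q,\mathbb R^\bullet_\bullet)$ while $X^\mathsf S$ corresponds to the affine members of $C(Q,\mathbb R^\bullet)=C(Q,\mathbb R^{\mathsf F\bullet})$, the lower semicontinuous functions, since $\mathbb R^{\mathsf F\bullet}$ is the Scott topology and $\mathbb R^\bullet$-continuity is lower semicontinuity. All unit-ball elements take values in $[-1,1]$, so completeness of the value space is not an issue there.

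I would first dispatch the unconditional structural claims. The easy inclusion $X^\mathsf{1C}\subseteq X^\mathsf{S1}$ says a $\mathbf d$-Cauchy $\carrowc$-limit of continuous functions is again lower semicontinuous and norm-bounded; this is the lower-semicontinuity preservation inside the proof of \autoref{Zcont} together with \autoref{X^Yprp}. Completeness of $X^\mathsf{S1}$ then follows by applying \autoref{Zcont} to the $\mathbf d$-complete value space $[-1,1]$: the lsc functions are $\sup$-$\mathbf d$-complete, and intersecting with the (weak*-closed) affine condition and the unit ball preserves $\mathbf d$-Cauchy limits. Granting the reverse inclusion $X^\mathsf{S1}\subseteq X^\mathsf{1C}$, we get $X^\mathsf{S1}=X^\mathsf{1C}=(X^1)^\mathsf C$, whence \autoref{CauchyCauchy} with $Y=X^1$ yields both $X^1\subseteq X^\mathsf{S1F}$ and $\mathbf d$-algebraicity; its finiteness hypothesis $\mathbf d(y,x_\lambda)\to\mathbf d(y,x)$ for $\mathbf d$-Cauchy $(x_\lambda)\subseteq X^1$ with $x_\lambda\carrowc x$ is supplied verbatim by \autoref{Dini}, since $\hat y$ is continuous (hence $X_\bullet$-continuous) and the $\hat x_\lambda$ are lsc.

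The heart of the matter, and the step I expect to be the main obstacle, is the reverse inclusion $X^\mathsf{S1}\subseteq X^\mathsf{1C}$: approximating a unit-ball lsc element from below by a $\mathbf d$-Cauchy net out of $X^1$. On the compact convex $Q$ an affine lsc function is the pointwise supremum of its weak*-continuous affine minorants vanishing at $0$, a Hahn--Banach separation argument in $X^*\times\mathbb R$, and these minorants can be taken in $X^1$ using $\|\cdot\|_\leq\leq\|\cdot\|$. The difficulty is not finding the minorants but organising them into a single net satisfying \eqref{Cauchy} with the correct $\carrowc$-limit: I would index by a directed set in the spirit of the $\nabla$ construction from the proof of \autoref{CauchyCauchy}, and secure the Cauchy condition by the finite-subcover argument of \autoref{Dini} using compactness of $Q$. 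This is precisely the generalisation of the Akemann--Pedersen--Brown approximation of lower semicontinuous elements.

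Finally the completion equalities. For $X^\mathsf C=X^\mathsf D$ one inclusion is immediate and the other reuses the $\nabla$-reindexing to convert a $\mathbf d$-Cauchy net into a $\mathbf d$-dominating one with the same $X^\circ_\circ$-limit. Norm-separability of $X\!\bullet$ lets one extract a cofinal $\leq^\mathbf d$-increasing sequence from an increasing net, giving $X^\mathsf C=X^\sigma$; when $X^1$ is $\leq$-bounded its bound plays the role of a surrogate maximum allowing a Cauchy net to be replaced by a genuinely $\leq^\mathbf d$-increasing one (the device of the remark after \autoref{Dini}, replacing $g_\lambda$ by $(g_\lambda+s)\wedge f$), giving $X^\mathsf C=X^\mathsf m$; and $\leq$-directedness of $X^{1>}$ furnishes the norm-dense approximation behind $X^\mathsf C=\overline{X^\mathsf m}$. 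The hypothesis $X^*=X^*_+-X^*_+$ makes the cone generating, so $\|\cdot\|_\leq$ separates $X^{**}$ and the restriction-to-$Q$ picture is faithful; with this the set equality $X^\mathsf C=X^\mathsf S$ holds, and $X^\mathsf{SF}=X$ follows by combining $X\subseteq X^\mathsf{SF}$ (scaling $X^1\subseteq X^\mathsf{S1F}$) with the converse: feeding a finite lsc $x$ its own from-below $\mathbf d$-Cauchy approximant $g_\lambda\in X$, finiteness gives $\mathbf d(x,g_\lambda)\to0$ and \eqref{arrowc} gives $\mathbf d(g_\lambda,x)\to0$, so $\hat g_\lambda\to\hat x$ uniformly, $\hat x$ is continuous, and the generating hypothesis places $x$ in $X$.
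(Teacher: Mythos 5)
The genuine gap is at what you yourself call the heart of the matter, the inclusion $X^\mathsf{S1}\subseteq X^\mathsf{1C}$: the tools you propose there cannot produce the required net. Hahn--Banach separation in $X^*\times\mathbb{R}$ does give weak*-continuous affine minorants of $x$ whose pointwise supremum on $Q$ is $x$, but a family with supremum $x$ is useless until it contains a $\mathbf{d}$-Cauchy net $\carrowc$-converging to $x$, and neither of your two devices supplies one. The $\nabla$ construction in \autoref{CauchyCauchy} diagonalises a Cauchy net of \emph{already given} Cauchy nets (there $y_\lambda^\gamma\carrowc x_\lambda$ is a hypothesis), so invoking it here is circular; and \autoref{Dini} concerns nets that are already $\sup$-$\mathbf{d}$-Cauchy, so it cannot manufacture Cauchyness. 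The paper's mechanism (following \cite{Pedersen1979} Lemma 3.11.2) is different: one shows the minorant family $Y$ is \emph{directed}, whence, being pointwise increasing on $Q$, it is automatically $\mathbf{d}$-Cauchy. Directedness requires a \emph{second} separation argument applied to pairs $y,z\in Y$ (their pointwise maximum is not affine): one separates the compact convex hull of $y\cap(Q\times\mathbb{R})$, $z\cap(Q\times\mathbb{R})$ and the slab $X^{*1}\times\{-m\}$ from the closed supergraph $G$ of $x$, and then needs Krein-\v{S}mulian to see that the separating hyperplane is weak*-continuous on all of $X^*$. Moreover the norm bound is built into the order ($y\prec z\Leftrightarrow||z||<1-y(0)$ and $y<z$ on $Q$) so that the normalisations $x_y=(y-y(0))/(1\vee||y||-y(0))\in X^1$ satisfy $||y-x_y||\rightarrow0$, using $\sup_{y\in Y}y(0)=0$ (yet another separation). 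Your remark that the minorants ``can be taken in $X^1$ using $||\cdot||_\leq\leq||\cdot||$'' skips all of this; without the directedness and normalisation there is no net at all.

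The completion equalities are also under-powered as sketched. The paper's route is the factorisation $\mathbf{d}=\mathbf{e}\circ\leq$ (from $||x-y||_\leq=\inf_{z\leq y}||x-z||$) together with norm-completeness, which is exactly what the cited theorems of \cite{Bice2016b} consume to give $X^\mathsf{C}=X^\mathsf{D}$, $X^\mathsf{C}=X^\sigma$ and $X^\mathsf{C}=X^\mathsf{m}$. Your substitutes fail in this generality: replacing $g_\lambda$ by $(g_\lambda+s)\wedge f$ needs lattice operations that a general preordered Banach space does not have (the paper's remark after \autoref{Dini} makes precisely this point), and ``$\nabla$-reindexing'' does not convert a Cauchy net into a dominating one. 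For the generating case, the operative consequence of $X^*=X^*_+-X^*_+$ is not faithfulness of restriction to $Q$ but the quantitative $r$-generation of \cite{AsimowEllis1980}, giving $\mathbf{e}\leq r\mathbf{d}^\vee$ and hence uniform equivalence of $\mathbf{e}$ and $\mathbf{d}^\vee$; this is what provides $\mathbf{e}$-bounded subnets of $\mathbf{d}$-Cauchy nets, so that Banach--Alaoglu and \eqref{w*circcirc} apply to nets not confined to any ball --- needed both for $X^\mathsf{C}\subseteq X^\mathsf{S}$ and to pass from $X^1\subseteq X^\mathsf{S1F}$ to $X\subseteq X^\mathsf{SF}$ (plain scaling does not handle Cauchy nets lying in no ball). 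On the positive side, your closing argument that a finite lsc element lies in $X$ (finiteness plus \eqref{arrowc} give uniform convergence on $Q$, hence continuity, hence membership in $X$ by Krein-\v{S}mulian) is correct and is essentially the contrapositive of the paper's, and your use of \autoref{CauchyCauchy} with \autoref{Dini} for $X^1\subseteq X^\mathsf{S1F}$ is a workable variant of the paper's direct application of \autoref{Dini} to lsc nets; but neither repairs the two gaps above.
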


\begin{proof}  First note $X^{**1}$ is $\mathbf{d}$-complete and, for $x\in X^{**}$ and $\mathbf{d}$-Cauchy $(x_\lambda)\subseteq X^{**1}$,
\begin{equation}\label{w*circcirc}
x_\lambda\carrowc x\qquad\Leftrightarrow\qquad x_\lambda(\phi)\rightarrow x(\phi),\text{ for all }\phi\in Q.
\end{equation}
For by the Banach-Alaoglu theorem, $(x_\lambda)$ has a subnet $(x_\gamma)$ with a weak*-limit \nolinebreak$x$.  In particular, $x_\gamma(\phi)\rightarrow x(\phi)$, for all $\phi\in Q$, so $x_\gamma\carrowc x$, by \autoref{X^Yprp} (with $X=\mathbb{R}$ and $\mathbf{d}(r,s)=|r-s|_\leq$ on $\mathbb{R}$).  Thus $x$ is also a $X^\circ_\circ$-limit of the original net $(x_\lambda)$, by \cite{Bice2016b} Corollary 1.  Now if $x(\phi)=y(\phi)$, for all $\phi\in Q$, then $x=^\mathbf{d}y$, i.e. $x\leq^\mathbf{d}y\leq^\mathbf{d}x$, and so $x_\lambda\carrowc y$.  While conversely, if $y$ is any other $X^\circ_\circ$-limit of $(x_\lambda)$ then $\mathbf{d}(x,y)\leq\lim\mathbf{d}(x_\lambda,y)=0$, by \eqref{arrowc}.  Likewise $\mathbf{d}(y,x)=0$ so $x=^\mathbf{d}y$, which means $x(\phi)=y(\phi)$, for all $\phi\in Q$.  This proves \eqref{w*circcirc}, and now if $(x_\lambda)\subseteq X^\mathsf{S}$ then $x\in X^\mathsf{S}$ too, by \autoref{Zcont}.  Thus $X^\mathsf{S1}$ is $\mathbf{d}$-complete.

To show $X^\mathsf{S1}\subseteq X^\mathsf{1C}$ we argue as in \cite{Pedersen1979} Lemma 3.11.2.  Take $x\in X^\mathsf{S1}$ and consider $Y\subseteq A(X^*)(=$ the weak*-continuous affine functionals on $X^*)$ defined by
\[Y=\{y\in A(X^*):y(\phi)<x(\phi),\text{ for all }\phi\in Q\}.\]

We claim that $Y$ is a directed set when we define $||z||=\sup_{\phi\in X^{*1}}z(\phi)$ and
\[y\prec z\qquad\Leftrightarrow\qquad ||z||<1-y(0)\quad\text{and}\quad y(\phi)<z(\phi),\text{ for all }\phi\in Q.\]
To see this, first note that $x\prec y\prec z$ implies $||z||<1-y(0)<1-x(0)$, as $x(0)<y(0)$, so $\prec$ is transitive.  Now consider the supergraph of $x$
\[G=\{(\phi,r)\in X^*\times\mathbb{R}:x(\phi)\leq r\},\]
and take $y,z\in Y\subseteq X^*\times\mathbb{R}$, i.e. identify $y$ and $z$ with their graphs.  Note that $y\cap(Q\times\mathbb{R})$, $z\cap(Q\times\mathbb{R})$ and $X^{*1}\times\{-m\}$, for $m=1-\max(y(0),z(0))$, are all compact convex sets disjoint from $G$, i.e. $y(\phi),z(\phi)<x(\phi)$, for all $\phi\in Q$, and $-m<x(\phi)$, for all $\phi\in X^{*1}$.  Thus their convex hull $C$ is also a compact convex set disjoint from $G$.  As $x$ is lower semicontinuous on $Q$, $G\cap(Q\times\mathbb{R})$ is closed, and hence can be separated from $C$ by a closed hyperplane $h$ (see \cite{Megginson1998} Theorem 2.2.28).  Thus $h$ is (the graph of) an affine function on $X^*$, which is also weak*-continuous on $X^{*1}$.  For if $\phi$ is a weak*-limit of $(\phi_\lambda)\subseteq X^{*1}$ then $h(\phi_\lambda)$ has a cluster point in $[-m,m]$.  For any such cluster point $r$, we must have $h(\phi)=r$, as $h$ is closed.  Thus there is only one such cluster point, i.e. $h(\phi_\lambda)\rightarrow h(\phi)$, so $h$ is weak*-continuous on $X^{*1}$.
Thus $h$ is weak*-continuous on all of $X^*$, by Krein-\v{S}mulian (see \cite{Megginson1998} Corollary 2.7.9), i.e. $h\in A(X^*)$.  Now $y(\phi),z(\phi)<h(\phi)<x(\phi)$, for all $\phi\in Q$.  Also $||h||<m$, as $h(0)<x(0)=0$ and $-m<h(\phi)$, for all $\phi\in X^{*1}$.  Thus $y,z\prec h$.

Likewise, we can separate $(0,r)$ from $G$, for all $r<0$, so $\sup_{y\in Y}y(0)=0$.  Set
\[x_y=(y-y(0))/(1\vee||y||-y(0)),\] for all $y\in Y$, and note that $(x_y)\subseteq X^1$.  When $z\prec y$
\begin{align*}
||y-x_y|| &\leq||(1\vee||y||)y-y(0)y-y+y(0)||\leq(|||y||-1|_\leq-y(0))||y||-y(0)\\
&\leq-2z(0)(1-z(0))-z(0)=2z(0)^2-3z(0).
\end{align*}
As $\sup_{z\in Y}z(0)=0$, we have $||y-x_y||\rightarrow0$.  As $(y)_{y\in Y}$ is increasing and hence $\mathbf{d}$-Cauchy, $(x_y)_{y\in Y}$ is also $\mathbf{d}$-Cauchy with the same pointwise limit on $Q$.  Whenever $r<x(\phi)$, we can separate $(\phi,r)$ from $G$, so this pointwise limit is $x$.  Thus $x_y\carrowc x$, by \eqref{w*circcirc}.  As $x\in X^\mathsf{S1}$ was arbitrary, $X^\mathsf{S1}\subseteq X^\mathsf{1C}$.

If $x\in X$ then $x$ is continuous on $Q$.  If $(y_\lambda)\subseteq X^\mathsf{S1}$ with $y_\lambda\carrowc y\in X^\mathsf{S}$ then $y$ is a pointwise limit of $(y_\lambda)$ on $Q$, by \eqref{w*circcirc}.  Thus, by \autoref{Dini} (with $X=\mathbb{R}$) $\mathbf{d}(x,y_\lambda)\rightarrow\mathbf{d}(x,y)$.  As $x$ and $(y_\lambda)$ were arbitrary, $X^1\subseteq X^\mathsf{S1F}$.

Identifying $\leq$ with its characteristic function, we have $\mathbf{d}=\mathbf{e}\circ\mathbin{\leq}$ as
\[\mathbf{d}(x,y)=||x-y||_\leq=\inf_{x-y\leq z}||z||=\inf_{z\leq y}||x-z||=\inf_{z\leq y}\mathbf{e}(x,z).\]
Thus $X^\mathsf{C}=X^\mathsf{D}$ and, if $X$ is $\mathbf{e}$-separable, $X^\mathsf{C}=X^\sigma$, by \cite{Bice2016b} Theorem 3.  If $X^1$ is $\leq$-bounded and hence $\geq$-bounded, as $X^1=-X^1$, then $x^\bullet_r$ is also $\geq$-bounded, for all $x\in X$ and $r\in\mathbb{R}$.  Then \cite{Bice2016b} Theorem 1 yields $X^\mathsf{C}=X^\mathsf{m}$.

If $X^*=X^*_+-X^*_+$ then $X$ is $r$-generated, for some $r\in\mathbb{R}$, by \cite{AsimowEllis1980} Ch 2 Theorem 1.2.  This means that $\mathbf{e}\leq r\mathbf{d}^\vee$ where $\mathbf{d}^\vee=\mathbf{d}\vee\mathbf{d}^\mathsf{op}$ and $\mathbf{d}^\mathsf{op}(x,y)=\mathbf{d}(y,x)$.  As $\mathbf{d}\leq\mathbf{e}$, this means $\mathbf{e}$ and $\mathbf{d}^\vee$ are uniformly equivalent and hence any $\mathbf{d}$-Cauchy $(x_\lambda)\subseteq X$ with $x_\lambda\carrowc x$ has an $\mathbf{e}$-bounded subnet.  Indeed, for sufficiently large $\lambda$ all $\gamma\succ\lambda$, we have $\mathbf{d}(x_\lambda,x_\gamma),\mathbf{d}(x_\gamma,x)<1$.  Thus $x\in X^\mathsf{S}$, by \eqref{w*circcirc} and \autoref{Zcont}.  Thus $X^\mathsf{C}\subseteq X^\mathsf{S}$ and hence $X^\mathsf{C}=X^\mathsf{S}$, as $X^\mathsf{S}=\mathbb{R}X^\mathsf{S1}\subseteq X^\mathsf{C}$.

Now if $x\in X^{**}\setminus X$ then $x$ is not continuous on $X^{*1}$, again by Krein-\v{S}mulian.  As $\frac{1}{r}X^{*1}$ is contained in the convex hull of $Q$ and $-Q$, $x$ is not continuous on $Q$ either.  To see this note, as $x$ is not continuous on $X^{*1}$, we have $\phi_\lambda\rightarrow\phi$ with $x(\phi_\lambda)\not\rightarrow x(\phi)$.  Taking a subnet, we may assume $x(\phi_\lambda)\rightarrow s\neq x(\phi)$.  We also have $(\psi_\lambda),(\theta_\lambda)\subseteq rQ$ with $\phi_\lambda=\psi_\lambda-\theta_\lambda$.  Taking further subnets we may assume $(\psi_\lambda)$ and $(\theta_\lambda)$ have weak*-limits $\psi$ and $\theta$ respectively and hence $\phi=\psi-\theta$.  If $x$ were continuous on $Q$ then $x(\phi_\lambda)=x(\psi_\lambda)-x(\theta_\lambda)\rightarrow x(\psi)-x(\theta)=x(\phi)\neq s$, a contradiction.

So if $x\in X^\mathsf{S}\setminus X$ then $x(\phi)+\epsilon<\lim x(\phi_\lambda)$, for some $\phi_\lambda\rightarrow\phi$ in $Q$ and $\epsilon>0$.  Take $\mathbf{d}$-Cauchy $(x_\gamma)\subseteq X^1$ with $x_\gamma\carrowc x$ so, for all sufficiently large $\gamma$, $\mathbf{d}(x_\gamma,x)<\epsilon$.  As $x_\gamma$ is continuous on $Q$, $\lim_\lambda x_\gamma(\phi_\lambda)=x_\gamma(\phi)<x(\phi)+\epsilon<\lim_\lambda x(\phi_\lambda)$ and hence $\mathbf{d}(x,x)=0<\lim_\lambda(x(\phi_\lambda)-x(\phi_\lambda))\leq\mathbf{d}(x,x_\gamma)$, so $x\notin X^\mathsf{CF}$, i.e. $X^\mathsf{CF}\subseteq X$.  As $X^1\subseteq X^\mathsf{C1F}$ and every $\mathbf{d}$-Cauchy net has an $\mathbf{e}$-bounded subnet, $X\subseteq X^\mathsf{CF}$.

Finally, if $X^{1>}$ is $\leq$-directed then so is $x^s_\bullet$, for all $x\in X$ and $s\in\mathbb{R}$.  As $\mathbf{d}^\vee$ and $\mathbf{e}$ are uniformly equivalent, \cite{Bice2016b} Theorem 2 then yields $X^\mathsf{C}=\overline{X^\mathsf{m}}$.
\end{proof}



Incidentally, the $\mathbf{d}$-completeness of $X^\mathsf{S1}$, $X^1\subseteq X^\mathsf{S1F}$, $X^\mathsf{C}=X^\mathsf{m}$ and $X^\mathsf{C}=\overline{X^\mathsf{m}}$ parts do not need $\mathbf{e}$-completeness and so apply to general preordered normed spaces.  Also, for the $X^\mathsf{C}=X^\mathsf{D}$, $X^\mathsf{C}=X^\sigma$ and $X^\mathsf{C}=X^\mathsf{m}$ parts we could replace $X^{**}$ with any other ordered normed space (or even distance space) containing $X$.

If $X$ is the self-adjoint part of a C*-algebra then $X^{1>}$ is $\leq$-directed (see the proof of \cite{Pedersen1979} Theorem 1.4.2) and $\leq$-bounded iff $X$ is unital, showing that \autoref{X**sc} generalizes \cite{Pedersen1979} Proposition 3.11.5(=\cite{AkemannPedersen1973} Theorem 2.1).
The rest of \autoref{X**sc} generalizes \cite{Brown1988} Corollary 3.25.

Now take compact Hausdorff $Q$, consider $X\!\bullet$ on $X$ and $X^\bullet$ on $X^\mathsf{S}$ and let
\begin{align*}
C&=C(Q,X\!\bullet)\subseteq X^Q.\\
B&=B(Q,X^{**})\subseteq(X^{**})^Q.\\
S&=C(Q,X^\bullet)\cap B\subseteq(X^\mathsf{S})^Q.
\end{align*}
We now extend \autoref{X**sc} to $C$ embedded in $B$.

\begin{thm}\label{CYX}
$S^1\subseteq C^\mathsf{1C}$ is $\mathbf{d}$-complete and $\mathbf{d}$-algebraic with $C^1\subseteq S^\mathsf{1F}$ and
\begin{align*}
C^\mathsf{C} &=C^\mathsf{D}.\\
C^\mathsf{C} &=C^\sigma\quad\text{if $X\!\bullet$ and $Q$ are second countable}.\\
C^\mathsf{C} &=C^\mathsf{m}\quad\text{if $X^1$ is $\leq$-bounded}.\\
\text{If $X^*=X^*_+-X^*_+$ then}\ \ C^\mathsf{C} &=S,\ S^\mathsf{F}=C\ \ \text{and}\\
C^\mathsf{C} &=\overline{C^\mathsf{m}}\quad\text{if $X^{1>}$ is $\leq$-directed}.
\end{align*}
\end{thm}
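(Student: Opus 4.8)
The plan is to mirror the proof of \autoref{X**sc} in the function-space setting of \autoref{D}, taking $B=B(Q,X^{**})$ with the supremum distance $\sup\text{-}\mathbf{d}$ as the ambient distance space (in place of $X^{**}$ with $\mathbf{d}$), so that $C$, $S$ and $\sup\text{-}\mathbf{d}$ play the roles of $X$, $X^\mathsf{S}$ and $\mathbf{d}$. It is worth stressing that one cannot simply apply \autoref{X**sc} to the preordered Banach space $C(Q,X)$: its semicontinuous elements $S$ live inside $B$ rather than the full bidual $C(Q,X)^{**}$, so the pointwise structure must be rebuilt by hand. The three transfer tools are \autoref{X^Yprp} (to pass between pointwise $\carrowc$-convergence and $\sup\text{-}\mathbf{d}$-convergence), \autoref{Dini} (to lift pointwise finiteness to the supremum distance) and \autoref{Zcont} (to see that $\sup\text{-}\mathbf{d}$-Cauchy limits remain continuous).

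First I would establish the $\sup\text{-}\mathbf{d}$-completeness of $S^1$. Given a $\sup\text{-}\mathbf{d}$-Cauchy net $(f_\lambda)\subseteq S^1$, each $(f_\lambda(p))$ is $\mathbf{d}$-Cauchy in $X^\mathsf{S1}$, which is $\mathbf{d}$-complete by \autoref{X**sc}; this yields a pointwise limit $f$ with $f_\lambda(p)\carrowc f(p)$, and then $f_\lambda\carrowc f$ by \autoref{X^Yprp}. It remains to check $f\in S$, i.e. that $f$ is still $X^\bullet$-continuous; here I would run the argument of \autoref{Zcont} (using $X^1\subseteq X^\mathsf{S1F}$ from \autoref{X**sc} to identify the relevant centres as $\mathbf{d}$-finite), which preserves continuity under the limit. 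Next, $C^1\subseteq S^\mathsf{1F}$ follows from \autoref{Dini}: for $f\in C^1\subseteq C(Q,X_\bullet)$ and $(g_\lambda)\subseteq S^1\subseteq C(Q,X^\bullet)$ with $g_\lambda\carrowc g$, the pointwise limits $\mathbf{d}(f(p),g_\lambda(p))\to\mathbf{d}(f(p),g(p))$ hold because $f(p)\in X^1\subseteq X^\mathsf{S1F}$, and \autoref{Dini} upgrades these to $\sup\text{-}\mathbf{d}(f,g_\lambda)\to\sup\text{-}\mathbf{d}(f,g)$.

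The main work is the inclusion $S^1\subseteq C^\mathsf{1C}$, and this is where I expect the chief obstacle. The pointwise statement $X^\mathsf{S1}\subseteq X^\mathsf{1C}$ was proved in \autoref{X**sc} by separating the supergraph of each $x$ from a compact convex set with a weak*-continuous affine hyperplane. To build a $\sup\text{-}\mathbf{d}$-Cauchy net in $C^1$ converging to a given $f\in S^1$, one cannot separate fibrewise and hope the resulting functionals vary continuously in $p\in Q$; instead I would carry out the separation parametrically, exploiting the joint lower semicontinuity of $(p,\phi)\mapsto f(p)(\phi)$ (from $X^\bullet$-continuity of $f$) together with the compactness of $Q$ to select the separating affine functionals continuously, thereby landing in $C=C(Q,X\!\bullet)$. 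Producing this continuous field of hyperplanes — in effect a continuous-selection upgrade of the Hahn--Banach/Krein--\v{S}mulian step — is the delicate point. Once $S^1\subseteq C^\mathsf{1C}$ is in hand, the reverse inclusion follows since $C^1\subseteq S^1$ and $S^1$ is $\sup\text{-}\mathbf{d}$-complete, giving $S^1=C^\mathsf{1C}$; then the finiteness verified above is exactly the hypothesis of \autoref{CauchyCauchy} (with $Y=C^1$ and $X=S^1$), which delivers both $C^1\subseteq S^\mathsf{1F}$ and the $\mathbf{d}$-algebraicity of $S^1$.

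For the displayed equalities I would first note $\sup\text{-}\mathbf{d}=\sup\text{-}\mathbf{e}\circ\leq$ on $B$, where $\leq$ is the pointwise order and $\sup\text{-}\mathbf{e}$ the supremum metric; a bounded fibrewise selection shows the supremum and infimum commute here. This reduces $C^\mathsf{C}=C^\mathsf{D}$, $C^\mathsf{C}=C^\sigma$ and $C^\mathsf{C}=C^\mathsf{m}$ to the corresponding results of \cite{Bice2016b} exactly as in \autoref{X**sc}, using that $C(Q,X)$ is $\sup\text{-}\mathbf{e}$-separable when $X\!\bullet$ and $Q$ are second countable, and that $\leq$-boundedness of $X^1$ makes the $\sup\text{-}\mathbf{d}$-balls $\geq$-bounded. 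When $X^*=X^*_+-X^*_+$, the space $X$ is $r$-generated, so $\sup\text{-}\mathbf{e}$ and $(\sup\text{-}\mathbf{d})^\vee=\sup\text{-}\mathbf{d}^\vee$ are uniformly equivalent; this yields $C^\mathsf{C}=\overline{C^\mathsf{m}}$ under $\leq$-directedness of $X^{1>}$ and, together with a parametric version of the Krein--\v{S}mulian discontinuity argument of \autoref{X**sc}, the remaining $C^\mathsf{C}=S$ and $S^\mathsf{F}=C$. The latter equality — showing that a genuinely norm-discontinuous $f\in S$ fails to be $\sup\text{-}\mathbf{d}$-finite — is the second place where the $p$-dependence must be handled with care, since the discontinuity now lives in the variable $p\in Q$ rather than in the values $f(p)$.
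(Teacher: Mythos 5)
Your scaffolding (\autoref{X^Yprp}, \autoref{Dini}, \autoref{Zcont}, \autoref{CauchyCauchy}, the reduction to \cite{Bice2016b}) matches the paper's, but the central step $S^1\subseteq C^\mathsf{1C}$ is not actually proved: you flag the ``continuous field of hyperplanes'' as the delicate point and leave it there. The paper never constructs such a field, and it is doubtful one could \textendash\, choosing separating hyperplanes continuously in $p$ is a set-valued selection problem to which no standard theorem obviously applies. What the paper does instead is use the pointwise result $X^\mathsf{S1}\subseteq X^\mathsf{1C}$ of \autoref{X**sc} as a black box: for $g\in S^1$, a finite $F\subseteq C^1$, $\epsilon>0$ and each $p\in Q$, it picks a single $x_p\in X^1$ with $\mathbf{d}(x_p,g(p))<\epsilon$ and $\sup_{f\in F}\mathbf{d}(f(p),x_p)-\mathbf{d}(f(p),g(p))<\epsilon$; these estimates persist on a neighbourhood $O_p$ by the $X^\bullet$-continuity of $g$ and the $X_\bullet$-continuity of each $f\in F$, and compactness of $Q$ plus a partition of unity $u_1,\ldots,u_n$ subordinate to finitely many $O_{p_k}$ produces $h_{F,\epsilon}=\sum u_kx_{p_k}\in C^1$ satisfying the same estimates uniformly (convexity of $\mathbf{d}$, which comes from a half-seminorm, is what lets the convex combination inherit them). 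The net $(h_{F,\epsilon})$ is then $\mathbf{d}$-pre-Cauchy, a $\mathbf{d}$-Cauchy subnet $(h_\lambda)$ satisfies $h_\lambda\carrowc g$, and this gives $S^1\subseteq C^\mathsf{1C}$ with no selection theorem at all. This gluing device, not a parametric Hahn--Banach, is the missing idea.

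The same omission recurs in your final paragraph. The identity $\sup$-$\mathbf{d}=\sup$-$\mathbf{e}\circ\leq$ on $B$ via a ``bounded fibrewise selection'' is true but not what is needed: the theorems of \cite{Bice2016b} must be applied to the distance space $C$ from which the Cauchy nets are drawn, so the composition $\mathbf{e}\circ\leq$ must be witnessed by elements of $C$, and a fibrewise selection $z_p\leq g(p)$ is not continuous in $p$. The paper again uses compactness and a partition of unity, which only yields the approximate inequality $(\mathbf{e}\circ\mathbin{\leq^\mathbf{d}_\epsilon})\leq\mathbf{d}$, and then upgrades this to the exact identity on $C$ using $\mathbf{e}$-completeness of $C$ (from \autoref{Zcont}) together with \cite{Bice2016b} Theorem 3. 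Likewise, $C^\mathsf{C}=\overline{C^\mathsf{m}}$ needs $\leq$-directedness of $C^{1>}$, not merely of $X^{1>}$, and that transfer is yet another compactness/partition-of-unity argument (with \cite{Bice2016b} Proposition 6), which your proposal skips. In short, every point where a fibrewise fact must be promoted to the function space is either left open or entrusted to a continuous-selection upgrade that would not go through; the single recurring technique that closes all of these gaps in the paper \textendash\, gluing pointwise approximants by partitions of unity \textendash\, is absent from your proposal.
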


\begin{proof}
Let us identify $X^{**}$ with the constant functions in $B$ and write $\mathbf{d}$ and $\mathbf{e}$ for $\sup$-$\mathbf{d}$ and $\sup$-$\mathbf{e}$ respectively.  As $X^\mathsf{S1}$ is $\mathbf{d}$-complete and $X\subseteq X^\mathsf{SF}$, $S^1$ is also $\mathbf{d}$-complete, by \autoref{Zcont}.  Moreover, the proof of \autoref{Zcont} shows that $B^\circ_\circ$-limits of $\mathbf{d}$-Cauchy nets in $S^1$ (which are unique up to $=^\mathbf{d}$) are pointwise limits.

Now, for any $x\in X^\mathsf{S1}\subseteq X^\mathsf{1C}$, we have $\mathbf{d}$-Cauchy $(x_\lambda)\subseteq X^1$ with $x_\lambda\carrowc x$.  So for any $y\in X^1\subseteq X^\mathsf{S1F}$, we have $\mathbf{d}(y,x_\lambda)\rightarrow\mathbf{d}(y,x)$.  Thus, for any $F\in[X^1]^{<\omega}(=$ the finite subsets of $X^1$) and $\epsilon>0$, we have $\lambda$ such that
\begin{align*}
\mathbf{d}(x_\lambda,x) &<\epsilon &\text{and}&&\sup_{y\in F}\mathbf{d}(y,x_\lambda)-\mathbf{d}(y,x) &<\epsilon.\\
\intertext{Thus for any $g\in S^1$, $F\in[C^1]^{<\omega}$, $p\in Q$ and $\epsilon>0$, we have $x_p\in X^1$ such that}
\mathbf{d}(x_p,g(p)) &<\epsilon &\text{and}&&\sup_{f\in F}\mathbf{d}(f(p),x_p)-\mathbf{d}(f(p),g(p))&<\epsilon,\\
\intertext{As $g$ is $X^\bullet$-continuous and each $f\in F$ is $X\!\bullet$-continuous and hence $X_\bullet$-continuous, for all $q$ in some open $O_p\ni p$ and all $f\in F$,}
\mathbf{d}(x_p,g(q)) &<\epsilon &\text{and}&&\sup_{f\in F}\mathbf{d}(f(q),x_p)-\mathbf{d}(f(p),g(p))&<\epsilon.\\
\intertext{As $Q$ is compact, we have $p_1,\ldots,p_n\in X$ with $Q=\bigcup O_{p_k}$.  As $Q$ is also Hausdorff, we have a partition of unity $u_1,\ldots,u_n\in C(Q,[0,1])$, i.e. such that $\sum u_k=1$ and $u_k^{-1}(0,1]\subseteq O_{p_k}$, for all $k$.  Defining $h_{F,\epsilon}=\sum u_kx_{p_k}\in C^1$, we then have}
\mathbf{d}(h_{F,\epsilon},g) &<\epsilon &\text{and}&& \sup_{f\in F}\mathbf{d}(f,h_{F,\epsilon})-\mathbf{d}(f,g)&<\epsilon.
\end{align*}

We claim that $h_{F,\epsilon}\carrow g$, ordering $[C^1]^{<\omega}\times(0,\infty)$ by $\subseteq\times>$.  To see this, take $p\in Q$, $z\in X^{**}$ and $\epsilon>0$.  We have $\mathbf{d}$-Cauchy $x_\lambda\carrowc g(p)$ so, for sufficiently large $\lambda$,
\[\mathbf{d}(x_\lambda,g(p))<\epsilon\qquad\text{and}\qquad\mathbf{d}(g(p),z)<\mathbf{d}(x_\lambda,z)+\epsilon.\]
Arguing as above (with $F=\emptyset$) yields $f\in C^1$ with $f(p)=x_\lambda$ and $\mathbf{d}(f,g)<\epsilon$.  Then, whenever $f\in F$, the definition of $h_{F,\delta}$ yields $\mathbf{d}(f,h_{F,\delta})<\mathbf{d}(f,g)+\delta$ so
\begin{align*}
\mathbf{d}(g(p),z)&<\mathbf{d}(f(p),z)+\epsilon\\
&\leq\mathbf{d}(f(p),h_{F,\delta}(p))+\mathbf{d}(h_{F,\delta}(p),z)+\epsilon\\
&<\mathbf{d}(f,g)+\delta+\mathbf{d}(h_{F,\delta}(p),z)+\epsilon\\
&\leq\mathbf{d}(h_{F,\delta}(p),z)+2\epsilon+\delta.
\end{align*}
Thus $h_{F,\delta}(p)\carrow g(p)$ and hence $h_{F,\delta}\carrow g$, by \autoref{X^Yprp}.

Also, whenever $h_{F,\epsilon}\in G$, we have $\mathbf{d}(h_{F,\epsilon},h_{G,\delta})<\epsilon+\delta$, showing that $(h_{F,\epsilon})$ is $\mathbf{d}$-pre-Cauchy and hence has a $\mathbf{d}$-Cauchy subnet $(h_\lambda)$, by \cite{Bice2016b} Proposition 1.  As $\mathbf{d}(h_{F,\epsilon},g)<\epsilon$, we have $h_\lambda\arrowc g$ and hence $h_\lambda\carrowc g$.  Thus
\[S^1\subseteq C^\mathsf{1C}.\]

For any $p\in Q$ and $\mathbf{d}$-Cauchy $(g_\lambda)\subseteq C^1$ with $g_\lambda\carrowc g\in S^1$, $(g_\lambda(p))$ is $\mathbf{d}$-Cauchy and $g_\lambda(p)\carrowc g(p)\in X^\mathsf{S}$, as mentioned above.  For any $f\in C$, $f(p)\in X\subseteq X^\mathsf{SF}$ so
\[\lim_\lambda\mathbf{d}(f,g_\lambda)=\lim_\lambda\sup_{p\in Q}\mathbf{d}(f(p),g_\lambda(p))=\sup_{p\in Q}\lim_\lambda\mathbf{d}(f(p),g_\lambda(p))=\mathbf{d}(f,g),\]
by \autoref{Dini}.  Thus $C^1\subseteq S^\mathsf{1F}$, by \autoref{CauchyCauchy}.

We next claim that on $C$ we have $\mathbf{d}=\mathbf{e}\circ\mathbin{\leq}$ (i.e. the supremum half-seminorm $\sup_{p\in Q}||f(p)||_\leq$ coincides with the canonical half-seminorm $\inf_{f\leq g}\sup_{p\in Q}||g(p)||$).
As $\mathbf{d}=\mathbf{e}\circ\mathbin{\leq}$ on $X$, given $f,g\in C$, $\epsilon>0$, and $p\in Q$, we have $x_p\in X$ with
\begin{align*}
\mathbf{e}(f(p),x_p) &<\mathbf{d}(f,g)+\epsilon &\text{and}&&\mathbf{d}(x_p,g(p))&=0.\\
\intertext{As $g$ is $X^\bullet$-continuous and $f$ is $X\!\bullet$-continuous, for all $q$ in some open $O_p\ni p$,}
\mathbf{e}(f(q),x_p) &<\mathbf{d}(f,g)+\epsilon &\text{and}&&\mathbf{d}(x_p,g(q))&<\epsilon.\\
\intertext{As $Q$ is compact, we have $p_1,\ldots,p_n\in X$ with $Q=\bigcup O_{p_k}$.  As $Q$ is also Hausdorff, we have $u_1,\ldots,u_n\in C(Q,[0,1])$ with $\sum u_k=1$ and $u_k^{-1}(0,1]\subseteq O_{p_k}$, for all $k$.  Defining $h=\sum u_kx_{p_k}\in C$, we then have}
\mathbf{e}(f,h) &<\mathbf{d}(f,g)+\epsilon &\text{and}&& \mathbf{d}(h,g) &<\epsilon.
\end{align*}
So $(\mathbf{e}\circ\mathbin{\leq^\mathbf{d}_\epsilon})\leq\mathbf{d}$, for all $\epsilon>0$, where $h\leq^\mathbf{d}_\epsilon g\Leftrightarrow\mathbf{d}(h,g)<\epsilon$.  By \autoref{Zcont}, $C$ is $\mathbf{e}$-complete so, by \cite{Bice2016b} Theorem 3, we have $C^\mathsf{C}=C^\mathsf{D}$,
\[(\mathbf{e}\circ\mathbin{\leq})=\sup_{\epsilon>0}(\mathbf{e}\circ\mathbin{\leq^\mathbf{d}_\epsilon})\leq\mathbf{d}\leq(\mathbf{d}\circ\mathbf{d})\leq(\mathbf{e}\circ\mathbin{\leq}),\]
and, if $X\!\bullet$ and $Q$ are second countable so $C\bullet$ is separable, $C^\mathsf{C}=C^\sigma$.

If $X^1$ is $\leq$-bounded then $C^1$ is too so $C^\mathsf{C}=C^\mathsf{m}$.  If $X^*=X^*_+-X^*_+$ then $C^\mathsf{C}=S$ and $S^\mathsf{F}=C$ follow as in the proof of \autoref{X**sc}.  If $X^{1>}$ is $\leq$-directed then another compactness/partition of unity argument combined with \cite{Bice2016b} Proposition 6 shows that $C^{1>}$ is also $\leq$-directed so again \cite{Bice2016b} Theorem 2 yields $C^\mathsf{C}=\overline{C^\mathsf{m}}$.
\end{proof}

If $X=\mathcal{K}(H)_\mathrm{sa}=$ self-adjoint compact operators on a Hilbert space $H$ then $X^{**}=\mathcal{B}(H)_\mathrm{sa}=$ self-adjoint bounded operators on $H$ and $X^\mathsf{C}=\mathcal{K}(H)_\mathrm{sa}+\mathcal{B}(H)_+$ (see \cite{Brown1988} 5.A).  By \autoref{CYX}, $\overline{C^\mathrm{m}}$ consists precisely of the functions from $Q$ to $\mathcal{K}(H)_\mathrm{sa}+\mathcal{B}(H)_+$ that are continuous w.r.t. the topology generated by upper balls with centre in $\mathcal{K}(H)_\mathrm{sa}$.  Thus \autoref{CYX} is a generalization of \cite{Brown1988} 5.13, which yields this characterization for $Q=\mathbb{N}\cup\{\infty\}$.


Above we could actually take $B$ to be the entirety of $(X^{**})^Q$, as long as we are comfortable with the norm $||\cdot||=\mathbf{e}(0,\cdot)$ taking infinite values.  Likewise, we could embed $X$ in the algebraic dual $X^{*\sharp}$ of $X^*$ instead of $X^{**}$, i.e. including even unbounded linear functionals on $X^*$.  This might even be considered cleaner in the sense that $X^\mathsf{C}=X^\mathsf{S}$ and $X^\mathsf{SF}=X$ would apply even without $X^*=X^*_+-X^*_+$ (although $X^*=X^*_+-X^*_+$ is still required for $\mathbf{e}$ and $\mathbf{d}^\vee$ to be uniformly equivalent and hence for $X^\mathsf{C}=\overline{X^\mathsf{m}}$).

\newpage

\bibliography{maths}{}
\bibliographystyle{alphaurl}

\end{document}